\newtheorem*{maintheorem*}{Main Theorem}
\newtheorem{theorem}{Theorem}[section]
\newtheorem{prop}[theorem]{Proposition}
\newtheorem{conj}[theorem]{Conjecture}
\newtheorem{lem}[theorem]{Lemma}
\newtheorem{cor}[theorem]{Corollary}
\theoremstyle{definition}
\newtheorem{defn}[theorem]{Definition}
\newtheorem{ex}[theorem]{Example}
\numberwithin{equation}{section}
\newcommand{\cc}{\mathbb{C}}
\newcommand{\ff}{\mathbb{F}}
\newcommand{\nn}{\mathbb{N}}
\newcommand{\pp}{\mathbb{P}}
\newcommand{\qq}{\mathbb{Q}}
\newcommand{\rr}{\mathbb{R}}
\newcommand{\zz}{\mathbb{Z}}
\newcommand{\ch}{\text{char}}
\providecommand\ldb{\llbracket}
\providecommand\rdb{\rrbracket}
\newcommand{\gp}{\text{gp}}
\newcommand{\qf}{\text{qf}}
\newcommand{\ii}{\mathcal{Irr}}
\keywords{atomic domain, hereditary atomicity, hereditarily atomic field, almost Dedekind domain, Dedekind domain, Pr\"ufer domain, ring of Laurent polynomials}
\subjclass[2020]{Primary: 13F15, 13A15, 13F05; Secondary: 13G05}
\begin{document}
	
	\mbox{}
	\title{Hereditary atomicity in integral domains}
	
	\author{Jim Coykendall}
	\address{School of Mathematical and Statistical Sciences\\Clemson University\\Clemson, SC 29634}
	\email{jcoyken@clemson.edu}
	
	\author{Felix Gotti}
	\address{Department of Mathematics\\MIT\\Cambridge, MA 02139}
	\email{fgotti@mit.edu}
	
	\author{Richard Erwin Hasenauer}
	\address{Department of Mathematics\\Northeastern State University\\Tahlequah, OK 74464}
	\email{hasenaue@nsuok.edu}

\date{\today}

\begin{abstract}
	If every subring of an integral domain is atomic, we say that the latter is hereditarily atomic. In this paper, we study hereditarily atomic domains. First, we characterize when certain direct limits of Dedekind domains are Dedekind domains in terms of atomic overrings. Then we use this characterization to determine the fields that are hereditarily atomic. On the other hand, we investigate hereditary atomicity in the context of rings of polynomials and rings of Laurent polynomials, characterizing the fields and rings whose rings of polynomials and rings of Laurent polynomials, respectively, are hereditarily atomic. As a result, we obtain two classes of hereditarily atomic domains that cannot be embedded into any hereditarily atomic field. By contrast, we show that rings of power series are never hereditarily atomic. Finally, we make some progress on the still open question of whether every subring of a hereditarily atomic domain satisfies ACCP.
\end{abstract}

\bigskip
\maketitle

\medskip
%%%%%%%%%%%
%%%%%%%%%%%
\section{Introduction}
\label{sec:intro}
\smallskip

An integral domain $R$ is said to be atomic if every nonzero nonunit of $R$ factors into irreducibles. Given that many relevant classes of commutative rings, including Krull domains and Noetherian domains, are atomic, the property of being atomic has been largely investigated in commutative ring theory. Since the appearance of the paper~\cite{aG74}, where A. Grams studies ascending chains of principal ideals in atomic integral domains, special attention has been given to atomicity in connection to ascending chains of ideals. The study of atomicity was significantly stimulated in 1990 by the paper~\cite{AAZ90}, where D. D. Anderson, D. F. Anderson, and M. Zafrullah introduce the bounded and finite factorization properties, which are properties that refine the class of atomic domains based on the notion of factorizations. Noetherian and Krull domains naturally fit in the proposed refinement, as they satisfy the bounded and the finite factorization properties, respectively (see \cite[Proposition~2.2 and page~14]{AAZ90}, \cite[Theorem~2]{AM96}, and \cite[Theorem~5]{HK92}).
\smallskip

Most of the important ring-theoretical properties of integral domains, including being Krull and being Noetherian, are not inherited by subrings. In~\cite{rG70}, R. Gilmer investigates integral domains whose subrings are Noetherian, concluding with a full characterization. Here we adopt the same methodology, but we replace the property of being Noetherian by the property of being atomic. Accordingly, we say that an integral domain $R$ is hereditarily atomic if every subring of $R$ is atomic. It is worth noticing that the property of being atomic is way more subtle than that of being Noetherian. For instance, in contrast to the latter, the former is not preserved under localization~\cite[page~189]{rG84} and it does not transfer to polynomial rings~\cite{mR93}. So, unlike the Noetherian counterpart, a full characterization of the hereditarily atomic domains seems to be highly difficult to establish. Having said that, here we offer what may be the first step towards a full classification, and we hope that our results shed some light upon a better understanding of hereditary atomicity in integral domains.
\smallskip

In Section~\ref{sec:fields}, we give a full characterization of the fields that are hereditarily atomic. We prove that a field $F$ of characteristic zero is hereditarily atomic if and only if $F$ is an algebraic extension of $\qq$ and the integral closure of $\zz$ in $F$ is a Dedekind domain. We also establish a parallel characterization for fields of positive characteristic. In our way to establish these characterizations, directed systems of integral domains $(R_\gamma)_{\gamma \in \Gamma}$, where $R_\alpha \subseteq R_\beta$ is integral and $[\qf(R_\alpha) : \qf(R_\beta)]$ is finite when $\alpha \le \beta$, played an important role (here $\qf(R)$ denotes the quotient field of $R$); we call them integral directed systems. We have found atomic conditions under which the union of an integral directed system of Dedekind domains is a Dedekind domain; we observe that, unlike for Pr\"ufer domains, the directed union of Dedekind domains may not be a Dedekind domain.
\smallskip

In contrast to Section~\ref{sec:fields}, the primary purpose of Section~\ref{sec:polynomials} is to provide classes of hereditarily atomic integral domains whose quotient fields are not hereditarily atomic. To achieve this, we consider hereditary atomicity in rings of polynomials and rings of Laurent polynomials. First, we determine the fields whose rings of polynomials are hereditarily atomic. Then we characterize the rings whose rings of Laurent polynomials are hereditarily atomic: it turns out that such rings are precisely the algebraic extensions of finite fields. We also prove that no ring of power series can be hereditarily atomic.

In Section~\ref{sec:hereditary ACCP}, we study the condition of being hereditarily ACCP. As for atomicity, we say that an integral domain $R$ is hereditarily ACCP if every subring of $R$ satisfies the ascending chain condition on principal ideals (ACCP). We conclude the paper exploring the following still open question: Is every hereditarily atomic domain hereditarily ACCP? We conjecture that this is, in fact, the case. Then we prove that the conjecture actually holds for several subclasses of atomic domains, while profiling some general properties of a potential counterexample should it exist.

\bigskip
%%%%%%%%%%%
%%%%%%%%%%%
\section{Background}
\label{sec:background}
\smallskip

Following standard notation, we let $\zz$, $\qq$, $\rr$, and $\cc$ denote the set of integers, rational numbers, real numbers, and complex numbers, respectively. Also, we let $\nn$ and $\nn_0$ denote the sets of positive and nonnegative integers, respectively, while we let $\pp$ denote the set of primes. For $p \in \pp$ and $n \in \nn$, we let $\ff_{p^n}$ be the finite field of cardinality $p^n$. In addition, for $a,b \in \zz$ with $a \le b$, we let $\ldb a,b \rdb$ denote the set of integers between $a$ and $b$, that is, $\ldb a,b \rdb = \{n \in \zz \mid a \le n \le b\}$. Finally, for $S \subseteq \rr$ and $r \in \rr$, we set $S_{\ge r} = \{s \in S \mid s \ge r\}$ and $S_{> r} = \{s \in S \mid s > r\}$.
\smallskip

Let $R$ be an integral domain. Throughout this paper, $R^\ast$ and $R^\times$ stand for the multiplicative monoid and the group of units of $R$, respectively. Also, we let $\qf(R)$ stand for the quotient field of~$R$. If $R \subseteq S$ is an extension of integral domains, we let $\overline{R}_S$ denote the integral closure of $R$ in $S$, and we let $\overline{R}$ denote the integral closure of $R$ in $\qf(R)$. An \emph{overring} of~$R$ is an intermediate ring of the ring extension $R \subseteq \qf(R)$. We let $\ii(R)$ denote the set of all irreducibles of $R$, and $R$ is said to be \emph{antimatter} if $\ii(R)$ is empty. Following P.~M. Cohn~\cite{pC68}, we say that $R$ is \emph{atomic} if every nonzero nonunit element of~$R$ factors into irreducibles. The class of atomic domains includes important classes of rings such as Noetherian domains and Krull domains \cite[Proposition~2.2]{AAZ90}. We say that $R$ satisfies the \emph{ascending chain condition on principal ideals} (or \emph{ACCP}) if every increasing sequence of principal ideals of $R$ (under inclusion) eventually stabilizes. One can readily check that every integral domain satisfying ACCP is atomic. The converse does not hold in general (see \cite{BC19,GL21,aG74,aZ82}). Here we introduce the following related notion.

\begin{defn}
	An integral domain $R$ is called \emph{hereditarily atomic} if every subring of~$R$ is atomic.
\end{defn}

Although every hereditarily atomic domain is clearly atomic, the converse does not hold (see Example~\ref{ex:atomic overrings vs hereditarily atomic quotient field}). As mentioned in the introduction, the primary purpose of this paper is to study hereditarily atomic domains.
\smallskip

Because most of the monoids we are interested in here are multiplicative monoids of integral domains, within the scope of this paper a \emph{monoid} is a cancellative and commutative semigroup with an identity element denoted by $1$. The quotient monoid $R_{\text{red}} := R^*/R^\times$ is clearly reduced (i.e., its only unit is $1$). We denote the free commutative monoid on $\ii(R_{\text{red}})$ by $\mathsf{Z}(R)$ and call it the \emph{factorization monoid} of $R$. Let $\pi \colon \mathsf{Z}(R) \to R_\text{red}$ be the unique monoid homomorphism fixing the set $\ii(R_{\text{red}})$. For $z = a_1 \cdots a_\ell \in \mathsf{Z}(R)$, where $a_1, \dots, a_\ell \in \ii(R_{\text{red}})$, we call $z$ a \emph{factorization} and say that $|z| := \ell$ is the \emph{length} of $z$. For every $r \in R^*$, set
\[
	\mathsf{Z}(r) := \mathsf{Z}_R(r) := \pi^{-1} (r R^\times) \quad \text{ and } \quad \mathsf{L}(r) := \mathsf{L}_R(r) := \{ |z| \mid z \in \mathsf{Z}(r) \}.
\]
Clearly, $R$ is atomic if and only if $\mathsf{Z}(r)$ is nonempty for every $r \in R^*$. Following \cite{AAZ90}, we say that $R$ is a \emph{bounded factorization domain} (or a \emph{BFD}) provided that $R$ is atomic and $\mathsf{L}(r)$ is finite for every $r \in R^*$. It is not hard to show that every BFD satisfies ACCP (see \cite[Corollary~1.3.3]{GH06}).
\smallskip

Recall that $R$ is a valuation domain if any two (principal) ideals of $R$ are comparable. It is well known that $R$ is a valuation domain if and only if there exist a totally ordered additive group~$G$ and a surjective group homomorphism $v \colon \qf(R)^\times \to G$ such that $R = \{r \in \qf(R) \mid v(x) \ge 0\}$ and $v(x+y) \ge \min\{v(x), v(y)\}$ for all $x,y \in \qf(R)^\times$ with $x+y \neq 0$. In this case, $G$ is unique up to isomorphism and is called the \emph{value group} of $R$. A valuation domain is Noetherian if and only if it is a discrete valuation ring (DVR) \cite[Theorem~5.18]{LM71}. A valuation domain $R$ with value group $G$ is called \emph{discrete} provided that its value group $G$ is discrete, namely, the positive cone of $G$ has a minimum element. Note that DVRs are special cases of discrete valuations.
\smallskip

Dedekind domains and almost Dedekind domains play an important role in some of the results we will establish later. Recall that an almost Dedekind domain is an integral domain whose localizations at maximal ideals are DVRs. Thus, an almost Dedekind domain is a Dedekind domain if and only if it is Noetherian. Unlike Dedekind domains, almost Dedekind domains are not necessarily atomic (see \cite{aG74}). Pr\"ufer domains will also be important in the next section. Recall that a Pr\"ufer domain is an integral domain where every nonzero finitely generated ideal is invertible. It is well known and not hard to verify that every almost Dedekind domain is a Pr\"ufer domain. Unlike almost Dedekind domains, Pr\"ufer domains may have dimension greater than one: the ring consisting of all entire functions on the complex plane is an infinite-dimension Pr\"ufer domain that is not atomic \cite[Section~8.1]{FHP97} and, therefore, non-Noetherian. Pr\"ufer domains are non-Noetherian versions of Dedekind domains: as for almost Dedekind domains, a Pr\"ufer domain is a Dedekind domain if and only if it is Noetherian (see \cite[Corollary~6.7]{LM71}).
\smallskip

Some examples we provide here are based on monoid rings. Let $M$ be an additive monoid. We let~$M^\bullet$ and $U(M)$ denote the set of nonzero elements of $M$ and the group of invertible elements of~$M$, respectively. The \emph{difference group} of $M$ (sometimes called the \emph{Grothendieck group} of $M$) is the unique abelian group $\gp(M)$ up to isomorphism satisfying that any abelian group containing a homomorphic image of $M$ will also contain a homomorphic image of $\gp(M)$. We say that $M$ is \emph{torsion-free} if the group $\gp(M)$ is torsion-free. For an integral domain $R$, the \emph{monoid ring} of $M$ over $R$, denoted by either $R[x;M]$ or $R[M]$, is the ring consisting of all polynomial expressions with exponents in $M$ and coefficients in $R$. When~$M$ is torsion-free, $R[M]$ is an integral domain \cite[Theorem~8.1]{rG84} and the group of units of $R[M]$ is $R[M]^\times = \{uX^m \mid u \in R^\times \ \text{and} \ m \in U(M)\}$ \cite[Theorem~11.1]{rG84}. A comprehensive expository material on the advances in monoid rings (until 1984) is given by Gilmer in~\cite{rG84}.

\bigskip
%%%%%%%%%%%%%%%%%%%%%%%%%%
%%%%%%%%%%%%%%%%%%%%%%%%%%
\section{Integral Domains with Atomic Overrings}
\label{sec:overrings}
\smallskip

It was proved in~\cite{CDM99} that any integral domain can be embedded in an antimatter domain that is not a field. Thus, at the outset, any integral domain is contained in a non-atomic domain. For overrings there are restrictions, as we will see in the first part of this section. Recall that an integral domain~$R$ is Archimedean if for every nonunit $x \in R$ the equality $\bigcap_{n \in \nn} Rx^n = (0)$ holds.

\begin{lem}\label{val}
	For a valuation domain $V$, the following conditions are equivalent.
	\begin{enumerate}
		\item[(a)] $V$ is atomic.
		\smallskip
		
		\item[(b)] $\dim V \le 1$ and $V$ is discrete.
		\smallskip
		
		\item[(c)] $V$ is Noetherian (and so a DVR).
		\smallskip
		
		\item[(d)] $V$ is discrete and Archimedean.
	\end{enumerate}
\end{lem}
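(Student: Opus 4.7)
The plan is to pass to the value group $G$ and the valuation $v\colon \qf(V)^\times \to G$, and then translate each of (a)--(d) into an order-theoretic statement about $G$. The basic dictionary is that for $a,b\in V^\ast$ one has $a\mid b$ iff $v(a)\le v(b)$, so $a\in V^\times$ iff $v(a)=0$, and a nonunit $a\in V^\ast$ is irreducible iff $v(a)$ is the minimum of the positive cone $G_{>0}$; moreover, every $\gamma\in G_{>0}$ lifts to some $c\in V^\ast\setminus V^\times$ with $v(c)=\gamma$. We may assume $V$ is not a field, as each of the four conditions is vacuously true in that case.

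For the implication (c)$\Rightarrow$(a), I would just invoke the standard fact, recalled in Section~\ref{sec:background}, that every Noetherian domain is atomic. For (a)$\Rightarrow$(b), choose an irreducible $a\in V$ and set $\gamma_0 := v(a)$; by the dictionary, $\gamma_0$ is a minimum of $G_{>0}$, and it is the unique such minimum since $G$ is totally ordered. For an arbitrary $\gamma\in G_{>0}$, lift $\gamma$ to a nonunit $c\in V^\ast$ and factor $c = a_1\cdots a_n$ into irreducibles; each $v(a_i)$ equals $\gamma_0$, so $\gamma = n\gamma_0$. Hence $G = \zz\gamma_0$, which is discrete of rank one, giving $\dim V = 1$. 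For (b)$\Rightarrow$(c), the condition $\dim V\le 1$ is equivalent (via the standard bijection between prime ideals of $V$ and convex subgroups of $G$) to $G$ having rank at most one, i.e., $G$ embedding order-preservingly into $\rr$. A rank-one group with a minimum positive element $\gamma_0$ is forced to equal $\zz\gamma_0$ by a floor argument in $\rr$, so $V$ is a DVR and therefore Noetherian.

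It remains to establish (c)$\Leftrightarrow$(d). A DVR with uniformizer $\pi$ is clearly discrete, and $\bigcap_{n\in\nn} V\pi^n = (0)$ because any element of the intersection would have valuation exceeding every positive integer; this gives (c)$\Rightarrow$(d). Conversely, the identity $Vx^n = \{y\in V : v(y)\ge n\,v(x)\}$ shows that the Archimedean hypothesis on $V$ translates precisely to $G$ being Archimedean as an ordered group, which forces $G$ to have rank at most one; combining with discreteness yields $G \cong \zz$ exactly as in the previous paragraph.

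The main obstacle is simply bookkeeping in the value-group dictionary---in particular, the observation that the unique minimality of $\gamma_0$ in a totally ordered group forces all irreducibles to share a single value (collapsing the atomicity condition to the statement that $G_{>0}$ is generated by $\gamma_0$), and that the ring-theoretic Archimedean condition translates cleanly into the classical Archimedean condition on the ordered group. Once these translations are in hand, the remaining work reduces to the standard classification of rank-one totally ordered abelian groups as subgroups of $\rr$.
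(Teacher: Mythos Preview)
Your proof is correct, but your route is genuinely different from the paper's. You work entirely through the value-group dictionary: for (a)$\Rightarrow$(b) you show directly that atomicity forces $G=\zz\gamma_0$ (so you in fact get (a)$\Rightarrow$(c) in one stroke), and for (d)$\Rightarrow$(c) you translate the ring-theoretic Archimedean condition into the group-theoretic one and invoke H\"older's theorem to get rank $\le 1$. The paper instead argues (a)$\Rightarrow$(b) and (d)$\Rightarrow$(c) by manipulating prime ideals directly---taking $P\subsetneq Q$ and producing a divisibility contradiction---then cites (b)$\Leftrightarrow$(c) as well known and appeals to Krull's Intersection Theorem for (c)$\Rightarrow$(d). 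Your approach is more uniform and self-contained (one dictionary, one structure theorem for ordered groups), while the paper's is more ideal-theoretic and leans on standard references; both are short, and yours has the mild advantage that the (a)$\Rightarrow$(b) step already delivers $G\cong\zz$ without a separate (b)$\Rightarrow$(c) argument. One small quibble: your claim that all four conditions hold ``vacuously'' for a field depends on reading the paper's definition of \emph{discrete} (positive cone has a minimum) as satisfied by the trivial group, which is a harmless convention but worth noting.
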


\begin{proof}
	(a) $\Rightarrow$ (b): Suppose, by way of contradiction, that there are nonzero prime ideals $P$ and $Q$ such that $P \subsetneq Q$. Let $p$ be a nonzero element of $P$. Since $V$ is atomic, $p$ decomposes into irreducibles, namely, $p = a_1 \cdots a_m$ for some $a_1, \dots, a_m \in \ii(V)$. Because $P$ is prime, $a_j \in P$ for some $j \in \ldb 1,m \rdb$. Take $q \in Q \setminus P$. Since $a_j \nmid_V q$, the fact that $V$ is a valuation domain guarantees that $q \mid_V a_j$. Thus, $q$ and $a_j$ are associates, contradicting that $q \notin P$. Therefore $\dim V \leq 1$. We now note that if $V$ is not discrete, then $V$ has no irreducible elements: this is because the value group of~$V$ has no minimum positive element. 
	\smallskip
	
	(b) $\Leftrightarrow$ (c): This is well known.
	\smallskip
	
	(c) $\Rightarrow$ (a): Every Noetherian domain is atomic (see~\cite[Proposition~2.2]{AAZ90}).
	\smallskip
	
	(c) $\Rightarrow$ (d): This is also clear as every DVR is a discrete valuation and every Noetherian domain is Archimedean by Krull's Intersection Theorem. 
	%It suffices to check that $V$ is Archimedean. For a nonzero nonunit $x \in R$, it follows from \cite[Theorem~5.10]{LM71} that $I := \bigcap_{n \in \nn} Rx^n$ is a prime ideal. Since $\dim V \le 1$, we see that $I = (0)$. Hence $V$ is Archimedean.
	\smallskip
	
	(d) $\Rightarrow$ (c): It suffices to argue that $\dim V \le 1$. To this end, we suppose that $P$ and $Q$ are prime ideals such that $P \subsetneq Q$. Now take $x \in Q \setminus P$ and $y \in P$. For each $n \in \mathbb{N}$, it is clear that $x^n\in Q \setminus P$, and so $x^n \mid_V y$. Therefore $y \in \bigcap_{n \in \mathbb{N}} Rx^n$, and the fact that $V$ is Archimedean guarantees that $y=0$. Hence $P$ is the zero ideal, and we can conclude that $\dim V \le 1$.
\end{proof}

The following corollary generalizes~\cite[Exercise~2.2.20]{iK74}.

\begin{cor}
	Let $R$ be an integral domain. If every overring of $R$ is atomic, then~$R$ must have valuative dimension at most $1$. In particular, the Krull dimension of $R$ is at most $1$.
\end{cor}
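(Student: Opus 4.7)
The plan is to reduce the claim directly to Lemma~\ref{val}. Recall that the valuative dimension of an integral domain $R$ is defined as the supremum of the Krull dimensions of the valuation overrings of $R$ (inside $\qf(R)$). So it suffices to bound the dimension of an arbitrary valuation overring $V$ of $R$ by $1$.

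Fix a valuation overring $V$ of $R$. Since $V$ is, by construction, an overring of $R$, our hypothesis forces $V$ to be atomic. Now Lemma~\ref{val}, applied in the implication (a)~$\Rightarrow$~(b), yields $\dim V \le 1$. As $V$ was an arbitrary valuation overring of $R$, the valuative dimension of $R$ is at most $1$.

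For the last sentence of the corollary, I would simply invoke the standard fact that the Krull dimension of an integral domain is bounded above by its valuative dimension (this follows because every chain of prime ideals in $R$ extends to a chain of prime ideals of the same length in some valuation overring dominating the corresponding localizations). Hence $\dim R \le 1$ as well.

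There is essentially no real obstacle here: the result is a formal consequence of Lemma~\ref{val} together with the definition of valuative dimension. The only subtlety worth stating explicitly is that one must appeal to the definition of valuative dimension in terms of valuation overrings (rather than via transcendence degree arguments) so that the hypothesis on overrings of $R$ is immediately applicable.
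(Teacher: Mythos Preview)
Your proof is correct and follows essentially the same approach as the paper: both arguments use that every valuation overring of $R$ is atomic and then invoke Lemma~\ref{val} to bound its dimension by $1$, concluding with the standard inequality between Krull and valuative dimension. The paper phrases this contrapositively and cites \cite[Proposition~7.17]{LM71} for the last step, but the content is identical.
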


\begin{proof}
	If the valuative dimension of $R$ exceeds 1, then $R$ has a valuation overring of dimension greater than $1$, which cannot be atomic by Proposition~\ref{val}. In general the Krull dimension is bounded above by the valuative dimension (see \cite[Proposition~7.17]{LM71}).
\end{proof}

It is perhaps natural to conjecture at this point that an integral domain has all its overrings atomic if and only if all of the valuations of its quotient field are discrete, but for example, if $R$ is almost Dedekind, but not Dedekind, then it is often not atomic and all of its valuation overrings are discrete and Archimedean.
\smallskip

Besides fields, there are integral domains whose overrings are atomic.

\begin{ex}
	Let $R$ be a one-dimensional Noetherian domain. It is well-known that every overring~$S$ of $R$ is also a one-dimensional Noetherian domain (see \cite[Theorem~93]{iK74}). Since $S$ is Noetherian, it must be atomic.
\end{ex}

Observe that if the quotient field of an integral domain $R$ is hereditarily atomic, then every overring of $R$ is atomic. There are, however, integral domains (that are not fields) having all their overrings atomic whose quotient field is not hereditarily atomic.

\begin{ex} \label{ex:atomic overrings vs hereditarily atomic quotient field}
	Since the ring of polynomials $\qq[x]$ is a Dedekind domain, every overring of $\qq[x]$ is a Dedekind domain and, therefore, atomic. However, its quotient field, $\qq(x)$, is not hereditarily atomic. Indeed, $\zz + x\qq[x]$ is a subring of $\qq(x)$ that is not atomic.
\end{ex}

For our next result, we consider almost Dedekind domains that arise in a specific way, as a directed union of a family of Dedekind domains. This situation mirrors what happens when almost Dedekind domains are constructed from ``large" algebraic extensions of Dedekind domains. Recall that a directed set $\Lambda$ is a partially ordered set satisfying that for all $\alpha, \beta \in \Lambda$, there exists $\theta \in \Lambda$ such that $\alpha \le \theta$ and $\beta \le \theta$. A family of integral domains $\{R_\lambda\}_{\lambda \in \Lambda}$ is called a \emph{directed system} of integral domains provided that $\Lambda$ is a directed set and $R_\alpha \subseteq R_\beta$ if $\alpha \le \beta$. In this case, the integral domain $\bigcup_{\lambda \in \Lambda} R_\lambda$ is the direct limit of the system $\{R_\lambda \}_{\lambda \in \Lambda}$ in the categorical sense.

\begin{defn}
	A directed system $\{R_\lambda\}_{\lambda \in \Lambda}$ of integral domains is called a \emph{directed integral system} of integral domains if, for all $\alpha, \beta \in \Lambda$ with $\alpha \le \beta$, the ring extension $R_\alpha \subseteq R_\beta$ is integral and $[\text{qf}(R_\beta) : \text{qf}(R_\alpha)] < \infty$.
\end{defn}

In general, it is known that the union of a directed system of Pr\"{u}fer domains is a Pr\"{u}fer domain. Here we use atomicity to characterize whether the directed union of a directed integral system of Dedekind domains is a Dedekind domain. Such a characterization will be the main tool that we develop to characterize, in Theorem~\ref{thm:meat}, the fields that are hereditarily atomic.

\begin{theorem}\label{tool}
	Let $D$ be a Pr\"{u}fer domain that is the union of a directed integral system of Dedekind domains. Then all the overrings of $D$ are atomic if and only if $D$ is a Dedekind domain.
\end{theorem}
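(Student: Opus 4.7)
My approach: the backward direction uses the standard fact that every overring of a Dedekind domain is itself Dedekind and hence atomic. For the forward direction, I would show $D$ is almost Dedekind with finite character, which together are equivalent to being Dedekind.

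Almost Dedekindness is immediate from Lemma~\ref{val}: each localization $D_M$ is an atomic overring of $D$ (by hypothesis) and a valuation domain (since $D$ is Pr\"ufer), hence a DVR. For finite character, first note that $D$ is integral over every $R_\lambda$: any $d \in D$ lies in some $R_\mu$, which by directedness can be chosen with $\mu \geq \lambda$, so $d$ is integral over $R_\lambda$. Thus each $x \in D^*$ lies in finitely many maximals of some $R_\lambda$, and every maximal of $D$ containing $x$ contracts to one of these. It therefore suffices to show that for each $\lambda$ and each $\mathfrak{m} \in \mathrm{Max}(R_\lambda)$, only finitely many maximals of $D$ lie over $\mathfrak{m}$.

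I would argue this by contradiction. Suppose an infinite family $\{M_i\}_{i \in \nn}$ of maximals of $D$ lies above a single $\mathfrak{m} \in \mathrm{Max}(R_\lambda)$, and consider the atomic overring $T := \bigcap_i D_{M_i}$ together with a uniformizer $\pi \in R_\lambda$ of $\mathfrak{m}$ (a nonzero nonunit of $T$, since $v_{M_i}(\pi) \geq 1$ for every $i$). In any hypothetical factorization $\pi = q_1 \cdots q_k$ in $T$ into irreducibles, pigeonhole forces some $q_j$ to have $v_{M_i}(q_j) > 0$ for infinitely many $i$. Since $q_j \in \qf(R_\mu)$ for some $\mu \geq \lambda$ and the $M_i$'s contract to only finitely many primes of $R_\mu$ above $\mathfrak{m}$, a second pigeonhole yields a prime $\mathfrak{p} \in \mathrm{Max}(R_\mu)$ with $v_\mathfrak{p}(q_j) > 0$ and infinitely many $M_i$ above $\mathfrak{p}$. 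Since distinct maximals of $D$ are distinguished at some finite level of the tower, one can then choose $\mu' \geq \mu$ at which these $M_i$ split across at least two primes of $R_{\mu'}$ above $\mathfrak{p}$, with some $\mathfrak{p}_1$ receiving (by pigeonhole) infinitely many $M_i$ and some $\mathfrak{p}_2 \neq \mathfrak{p}_1$ receiving at least one. The Chinese remainder theorem in the Dedekind domain $R_{\mu'}$ then produces $a \in R_{\mu'} \subseteq T$ with $v_{\mathfrak{p}_1}(a) = v_{\mathfrak{p}_1}(q_j)$ and $v_\mathfrak{q}(a) = 0$ at every other prime $\mathfrak{q}$ of $R_{\mu'}$ above $\mathfrak{m}$; a direct ramification-index computation verifies that $a$ divides $q_j$ in $T$ with both $a$ and $q_j/a$ nonunits (the latter because $v_{M_i}(q_j/a) = v_{M_i}(q_j) > 0$ for any $i$ with $M_i$ above $\mathfrak{p}_2$), contradicting the irreducibility of $q_j$ in $T$.

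The main obstacle will be the bookkeeping across the tower $R_\lambda \subseteq R_\mu \subseteq R_{\mu'} \subseteq D$: one must verify that the CRT-produced $a$ has the correct valuations at every $M_i$, including those not lying above the chosen primes $\mathfrak{p}_1, \mathfrak{p}_2$. The directed integral system hypothesis --- finiteness of the fibers of contraction at each level together with the integrality of the extensions $R_\alpha \subseteq R_\beta$ --- is exactly what makes the pigeonhole-and-CRT argument available in the required form.
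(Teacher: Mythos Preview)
Your argument is correct, and it takes a genuinely different route from the paper's. Both proofs share the same scaffolding: the backward direction is identical, almost Dedekindness follows from Lemma~\ref{val} applied to valuation overrings, and the crux is to rule out a maximal ideal $\mathfrak{m}$ of some $R_\lambda$ with infinitely many primes of $D$ lying over it. The divergence is in the choice of overring and the mechanism of contradiction. The paper localizes at $S = R_\lambda \setminus \mathfrak{m}$, so that each $S^{-1}R_\beta$ becomes a \emph{semilocal} Dedekind domain, hence a PID; an irreducible factor of the image of the element in $S^{-1}D$ then pulls back to a prime of some $S^{-1}R_\theta$, and the paper derives a contradiction from the fact that a prime at every finite stage must remain prime in the union. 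Your approach instead intersects the DVRs $D_{M_i}$ over the infinite fiber and works entirely valuation-theoretically: pigeonhole isolates an irreducible factor $q_j$ positive at infinitely many $M_i$, splitting at a higher level $R_{\mu'}$ together with CRT produces an explicit proper divisor $a$ of $q_j$ in $T$. Your method is more constructive and avoids the ``irreducible descends along integral extensions, hence prime in a PID'' step; the paper's method is slightly more conceptual and sidesteps the ramification bookkeeping you flag at the end. Both lean on exactly the same structural inputs---finite fibers via Krull--Akizuki and integrality of the tower---so neither is more general than the other.

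One small point worth making explicit in a full write-up: the CRT element $a \in R_{\mu'}$ may well have prime factors at primes of $R_{\mu'}$ \emph{not} lying over $\mathfrak{m}$, but this is harmless because every $M_i$ contracts to a prime above $\mathfrak{m}$, so those extraneous factors are invisible to all the valuations $v_{M_i}$ governing membership and units in $T$. You allude to this in your final paragraph, and the verification goes through exactly as you indicate.
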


\begin{proof}
	Since $D$ is a field, the statement of the theorem trivially follows. So we assume that $D$ is not a field. Let $\{ D_\lambda\}_{\lambda \in \Lambda}$ be a directed integral system of Dedekind domains whose union is~$D$. The reverse implication is clear as every overring of a Dedekind domain is again Dedekind~\cite[Corollary~13.2]{rF73} and, therefore, atomic.
	\smallskip

	For the direct implication, assume that every overring of $D$ is atomic. Since $D$ is a Pr\"ufer domain, it is integrally closed. Let us argue that $D$ is integral over $D_\alpha$ for each $\alpha \in \Lambda$. Take $\alpha \in \Lambda$ and $\omega \in D$. As $D$ is the union of the system $\{D_\lambda\}_{\lambda \in \Lambda}$, there exists $\beta \in \Lambda$ such that $\omega \in D_\beta$ and, since the set $\Lambda$ is directed, there is a $\theta \in \Lambda$ such that $D_\alpha, D_\beta \subseteq D_\theta$. As $\omega \in D_\theta$, which is an integral extension of $D_\alpha$, we see that~$\omega$ is integral over $D_\alpha$. Hence $D$ is integral over $D_\alpha$, as desired. Since $D_\alpha$ is a Dedekind domain,~$D$ must be a one-dimensional domain.
	\smallskip
	
	Now we claim that $D$ must be an almost Dedekind domain. To verify this, let $P$ be a prime ideal of~$D$. Since $D$ is a Pr\"ufer domain, $D_P$ is a valuation domain. Since $D_P$ is an overring of $D$, the former must be atomic by hypothesis. Thus, Lemma~\ref{val} ensures that $D_P$ is a DVR. As a consequence, $D$ is an almost Dedekind domain.
	\smallskip
	
	Finally, let us assume towards a contradiction that $D$ is not a Dedekind domain. Since $D$ is almost Dedekind but not Dedekind, \cite[Theorem~3]{rG64} guarantees the existence of a nonzero proper ideal of $D$ that is contained in infinitely many maximal ideals. So the fact that $D$ is atomic allows us to take $a \in \ii(D)$ such that $a$ is contained in infinitely many prime ideals of $D$. Take $\alpha \in \Lambda$ such that $a \in D_\alpha$ and record the prime ideal factorization of $a D_\alpha$ in $D_\alpha$, namely,
	\[
		a D_\alpha = \mathfrak{P}_1^{e_1}\mathfrak{P}_2^{e_2} \cdots \mathfrak{P}_k^{e_k}
	\]
	for distinct prime ideals $\mathfrak{P}_1, \dots, \mathfrak{P}_k$ of $D_\alpha$ and $e_1, \dots, e_k \in \nn$. As every prime ideal of $D$ containing~$a$ lies over one of the ideals $\mathfrak{P}_1, \dots, \mathfrak{P}_k$, at least one of such ideals, namely $\mathfrak{P}$, must be contained in infinitely many prime ideals of $D$. Write $S := D_\alpha \setminus \mathfrak{P}$. %of these prime ideals, which we denote by $\mathfrak{P}_\alpha$ (we will say $Q := Q_1$) must be contained in infinitely many prime ideals of $D$; we set $S := D_\alpha \setminus P_\alpha$.
	\smallskip
	
	For each $\beta \in \Lambda$ with $\beta \ge \alpha$, it is clear that $S$ is a multiplicative subset of $D_\beta$, and so we can consider the integral domain $R_\beta := S^{-1}D_\beta$. Observe that $\{R_\beta\}_{\beta \ge \alpha}$ form a directed integral system whose union is the localization $R: = S^{-1}D$. Since $R$ is a localization of the almost Dedekind domain~$D$, it is an almost Dedekind domain by \cite[Theorem~4(a)]{rG64}. Because there are infinitely many prime ideals of $D$ lying over $\mathfrak{P}$, the localization correspondence theorem guarantees that there are infinitely many prime ideals of $R$ lying over $S^{-1}\mathfrak{P}$. Therefore the nonzero ideal $Ra$ is contained in infinitely many prime ideals of $R$, and so $R$ is not a Dedekind domain.
	\smallskip
	
	Fix $\beta \in \Lambda$ with $\beta \ge \alpha$. Since the extension $R_\alpha \subseteq R_\beta$ is integral, $R_\beta$ is the integral closure of~$R_\alpha$ in $\qf(R_\beta)$. Now as $[\qf(R_\beta) : \qf(R_\alpha)] < \infty$, it follows, as a consequence of Krull-Akizuki Theorem (see~\cite[Theorem~11.7]{hM86}), that $R_\beta$ contains only finitely many prime ideals lying over $S^{-1}\mathfrak{P}$. Since $S^{-1}\mathfrak{P}$ is the only prime ideal of $R_\alpha$, we see that $R_\beta$ is a Dedekind domain with only finitely many prime ideals and, therefore, it is a PID. Thus, $R_\beta$ is a PID provided that $\beta \ge \alpha$. Because $R$ is an atomic domain (it is an overring of $D$), we can factor $a$ in $R$ as follows:
	\[
		a = a_1 a_2 \cdots a_m,
	\]
	where $a_1, \dots, a_m \in \ii(R)$. Since $R$ is the union of the directed integral system $\{R_\beta\}_{\beta \ge \alpha}$, there is a $\theta \in \Lambda$ with $\theta \ge \alpha$ such that $a_1, \dots, a_m \in R_\theta$. Since the extension $D_\theta \subseteq D$ is integral, so is $R_\theta \subseteq R$. Then it follows that $R^\times \cap R_\theta = R_\theta^\times$, and so $\ii(R) \cap R_\theta \subseteq \ii(R_\theta)$. Therefore $a_1, \dots, a_m \in \ii(R_\theta)$, and the fact that $R_\theta$ is a PID guarantees that $a_1, \dots, a_n$ are primes in $R_\theta$. As $a$ is contained in infinitely many prime ideals of $R$, we can assume, without loss of generality, that $a_1$ is not prime in $R$. So there exist $r_1, r_2, r_3 \in R$ such that $r_1 r_2 = r_3 a_1$ with $a_1$ not dividing either $r_1$ or $r_2$ in $R$. In particular, $r_1, r_2 \notin R^\times$. Choose $\omega \in \Lambda$ with $\omega \ge \alpha$ such that $r_1, r_2, r_3, a_1 \in R_\omega$. Since the extension $R_\omega \subseteq R$ is integral and $a_1$ is irreducible in $R$, it must be irreducible, and hence prime, in $R_\omega$. Then $a_1$ must divide $r_i$ in $R_\omega$ for some $i \in \{1,2\}$. As a result, $a_1$ would divide $r_i$ in $R$, which is a contradiction. Hence we conclude that $D$ is a Dedekind domain.
\end{proof}

Unlike Pr\"ufer domains, the directed union of Dedekind domains may not be a Dedekind domain.

\begin{ex} \label{ex:direct union of DDs that is not a DD}
	Let $\Lambda$ be the set consisting of all finite subsets of $\overline{\zz}_\cc$. It is clear that $\Lambda$ is a directed set under inclusion. In addition, for each $\lambda \in \Lambda$, the ring $R_\lambda : = \zz[\lambda]$ is a finite integral extension of $\zz$ and, therefore, it is a Dedekind domain. Observe, on the other hand, that the union of the directed system $\{R_\lambda\}_{\lambda \in \Lambda}$ is $\overline{\zz}_\cc$, which is not even Noetherian (for instance, the ideal $(\sqrt[n]{2} \mid n \in \nn)$ is not finitely generated).
\end{ex}

\bigskip
%%%%%%%%%%%%%%%%%%
%%%%%%%%%%%%%%%%%%
\section{Hereditarily Atomic Fields}
\label{sec:fields}
\smallskip

As we have seen in Example~\ref{ex:atomic overrings vs hereditarily atomic quotient field}, having a hereditarily atomic quotient field is a stronger condition than having all overrings atomic. Our primary purpose in this section is to classify the fields that are hereditarily atomic. In doing so, the results we have established in Section~\ref{sec:overrings} will be useful.

To begin with, we observe that if a field $F$ is a finite-dimensional extension of its prime field, then every subring of $F$ is Noetherian by \cite[Theorem]{rG70}. Since every Noetherian domain is a BFD by \cite[Proposition~2.2]{AAZ90}, we immediately obtain the following sufficient condition for a field to be hereditarily atomic.

\begin{prop}
	Let $F$ be a field. If $F$ is a finite-dimensional extension of its prime field, then $F$ is hereditarily atomic.
\end{prop}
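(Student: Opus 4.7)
The plan is to chain together two classical results, since the proposition is essentially their juxtaposition. First, Gilmer's characterization \cite[Theorem]{rG70} tells us that when $F$ is a finite-dimensional extension of its prime field, every subring of $F$ is Noetherian. Second, a standard consequence of the Noetherian hypothesis, recorded as \cite[Proposition~2.2]{AAZ90}, is that every Noetherian domain is a BFD and, in particular, atomic. So to prove the proposition I would take an arbitrary subring $R \subseteq F$, apply Gilmer to conclude that $R$ is Noetherian, and then invoke the Noetherian-implies-atomic fact to conclude that $R$ is atomic. Since $R$ was arbitrary, $F$ is hereditarily atomic, which is the definition we need.

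There is no genuine obstacle here: all the content has been packaged into Gilmer's theorem, whose proof handles two cases depending on whether the prime field is $\qq$ (so $F$ is a number field and subrings behave like orders inside its ring of integers) or $\ff_p$ (so $F$ is itself finite, and every subring is trivially Noetherian). Because the authors explicitly permit themselves to cite \cite[Theorem]{rG70}, the proof of the proposition reduces to this two-step chain, and consequently the actual write-up should be no more than two or three lines. The only minor thing worth flagging is that \cite[Proposition~2.2]{AAZ90} yields the stronger conclusion that every such subring is a BFD, so the proposition could in fact be strengthened to say that $F$ is hereditarily BFD; but since the current target is only hereditary atomicity, this observation is peripheral.
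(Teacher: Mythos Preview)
Your proposal is correct and matches the paper's own argument essentially verbatim: the paper also cites \cite[Theorem]{rG70} to get that every subring is Noetherian and then \cite[Proposition~2.2]{AAZ90} to conclude each subring is a BFD (hence atomic). Your side remark that this actually yields hereditary BFD is likewise observed by the authors in Corollary~\ref{por}.
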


As we will see later, the previous sufficient condition is not necessary. Indeed, characterizing the entirety of the family of hereditarily atomic fields is more delicate. For now let us collect two necessary conditions for a field to be hereditarily atomic.

\begin{prop} \label{prop:necessary conditions for HAFs}
	Let $F$ be a field. If $F$ is hereditarily atomic, then the following statements hold.
	\begin{enumerate}
		
		\item If $\emph{\ch}(F) = 0$, then $F$ is algebraic over $\mathbb{Q}$.
		\smallskip
		
		\item If $\emph{\ch}(F) = p \in \pp$, then the transcendence degree of $\ff_p \subseteq F$ is at most $1$.
		\smallskip
		
		\item Every valuation of $F$ is both discrete and Archimedean.
	\end{enumerate}
\end{prop}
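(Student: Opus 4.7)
The plan is to prove each item by contraposition: assuming the stated condition fails, I will exhibit a subring of $F$ that is not atomic, contradicting hereditary atomicity. Items (1) and (2) will rest on a common ``pullback'' construction inspired by Example~\ref{ex:atomic overrings vs hereditarily atomic quotient field}: whenever $F$ contains an element transcendental over a \emph{non-field} subring, the associated pullback subring will admit infinite divisibility by nonunits, destroying atomicity. Item (3) will follow almost directly from Lemma~\ref{val}.

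For (1), if $\ch(F) = 0$ and $F$ is not algebraic over $\qq$, I would fix some $t \in F$ transcendental over $\qq$ and work with the subring $R := \zz + t\qq[t]$ of the image of $\qq(t)$ inside $F$. First, I would verify that $R^\times = \{\pm 1\}$: a unit of $R$ must be a unit of $\qq[t]$, hence a nonzero rational constant, and its inverse must also lie in $R$, forcing both into $\zz$. Then I would argue that $t$ admits no factorization into atoms in $R$. Indeed, in any proposed factorization $t = f_1 \cdots f_n$ into nonunits, a degree count over $\qq$ forces exactly one factor, say $f_1$, to have positive $t$-degree, so $f_1 = rt$ for some $r \in \qq^\times$. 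But then $f_1 = 2 \cdot (rt/2)$ is a nontrivial splitting of $f_1$ into two nonunits of $R$, contradicting irreducibility.

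For (2), the same template applies with $\zz \subset \qq$ replaced by $\ff_p[t_1] \subset \ff_p(t_1)$. Given algebraically independent $t_1, t_2 \in F$ over $\ff_p$, I would form
\[
	R := \ff_p[t_1] + t_2\, \ff_p(t_1)[t_2] \subseteq \ff_p(t_1, t_2) \subseteq F,
\]
verify $R^\times = \ff_p^\times$ by the same argument, and then show $t_2$ has no atomic factorization in $R$: the unique factor of positive $t_2$-degree must take the form $c(t_1)\, t_2$ with $c(t_1) \in \ff_p(t_1)^\times$, and the identity $c(t_1)\,t_2 = t_1 \cdot (c(t_1)/t_1)\,t_2$ splits it nontrivially in $R$, where $t_1$ now plays the role that $2$ played in part~(1).

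For (3), if $v$ is a nontrivial valuation of $F$, then its valuation ring $V_v$ is a proper subring of $F$; hereditary atomicity forces $V_v$ to be atomic, and Lemma~\ref{val} then forces $V_v$ to be a DVR, so $v$ is both discrete and Archimedean. The trivial valuation is handled vacuously. The main obstacle across the three parts is the non-atomicity verification in (1) and (2); once the units of the pullback are pinned down and the degree bookkeeping in the single transcendental variable over the non-field base is carried out, the explicit splittings by $2$ and by $t_1$ close the argument.
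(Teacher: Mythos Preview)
Your proposal is correct and follows essentially the same approach as the paper: the same pullback subrings $\zz + t\qq[t]$ and $\ff_p[t_1] + t_2\ff_p(t_1)[t_2]$ are used for (1) and (2), and Lemma~\ref{val} is invoked for (3). The only difference is that where the paper cites \cite[Corollary~1.4]{AeA99} for the non-atomicity of these $D+M$-type subrings, you supply a direct self-contained argument (pinning down the units and exhibiting the infinite divisibility of $t$, respectively $t_2$), which is a mild but genuine improvement in self-containment.
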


\begin{proof}
	(1) Suppose first that $\ch(F) = 0$. Assume, by way of contradiction, that $F$ is not algebraic over $\qq$, and consider the subring $R = \zz + t \qq[t]$ of $F$, where $t \in F$ is a transcendental element over~$\qq$. It is clear that $R$ is isomorphic to the subring $\zz + x \qq[x]$ of the ring of polynomials $\qq[x]$. Since~$\zz$ is not a field, it follows from \cite[Corollary~1.4]{AeA99} that $R$ is not atomic, which is a contradiction.
	\smallskip
	
	(2) Suppose now that $\ch(F) = p \in \pp$. Similar to part~(1), if $t_1, t_2 \in F$ were algebraically independent over $\ff_p$, then the subring $\ff_p[t_1] + t_2 \ff_p(t_1)[t_2]$ of $F$ would be isomorphic to the non-atomic subring $\ff_p[t_1] + x \ff_p(t_1)[x]$ of the polynomial ring $\ff_p(t_1)[x]$.
	\smallskip
	
	(3) For this, it suffices to note that if $F$ has a valuation that is not discrete or a valuation that is not Archimedean, then its corresponding valuation domain will not be atomic by Lemma~\ref{val}.	
\end{proof}

In light of Proposition~\ref{prop:necessary conditions for HAFs}, in order to classify the fields that are hereditarily atomic, it suffices to restrict our attention to algebraic extensions of $\qq$ and field extensions of $\ff_p$ of transcendence degree at most $1$. However, not all such field extensions are hereditarily atomic. The following example sheds some light upon this observation.

\begin{ex}
	Take $p \in \pp$, and consider the additive submonoid $M = \zz[1/p]_{\ge 0}$ of $\qq_{\ge 0}$, where $\zz[1/p]$ is the localization of $\zz$ at the multiplicative set $\{p^n \mid n \in \nn_0\}$. Now consider the monoid ring $\ff_p[M]$ of $M$ over $\ff_p$. Since $\ff_p$ is a perfect field and $M = pM$, every polynomial expression in $\ff_p[M]$ is a $p$-th power in $\ff_p[M]$. As a consequence, $\ff_p[M]$ is an antimatter domain. As $\ff_p[M]$ is not a field, it cannot be atomic. Finally, observe that $\ff_p[M]$ is a subring of the algebraic extension $\ff_p(x^m \mid m \in M)$ of the field $\ff_p(x)$.
\end{ex}

We are in a position now to characterize the fields that are hereditarily atomic.

\begin{theorem}\label{thm:meat}
	Let $F$ be a field. 
	\begin{enumerate}
		\item If $\emph{char}(F) = 0$, then $F$ is hereditarily atomic if and only if $F$ is an algebraic extension of $\mathbb{Q}$ such that $\overline{\mathbb{Z}}_{F}$ is a Dedekind domain.
		\smallskip
		
		\item If $\emph{char}(F) = p \in \pp$, then $F$ is hereditarily atomic if and only if the transcendental degree of~$F$ over $\ff_p$ is at most~$1$ and $\overline{\ff_p[x]}_F$ is a Dedekind domain for every $x \in F$.
	\end{enumerate}
\end{theorem}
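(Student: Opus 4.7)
The plan is to handle both characteristics in parallel, with $\overline{\zz}_F$ playing in characteristic zero the role that $\overline{\ff_p[t]}_F$ plays when $t \in F$ is transcendental over $\ff_p$; note that if $x \in F$ is algebraic over $\ff_p$, then $\overline{\ff_p[x]}_F$ is simply the algebraic closure of $\ff_p$ inside $F$, which is a field and hence trivially Dedekind, so the only interesting case of the hypothesis in (2) is transcendental $x$.

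\textbf{Forward direction.} Assume $F$ is hereditarily atomic. Proposition~\ref{prop:necessary conditions for HAFs} at once yields the transcendence-degree conclusions. Write $D$ for $\overline{\zz}_F$ (resp.\ $\overline{\ff_p[x]}_F$ with $x$ transcendental). Then $D$ is the directed union, over finite intermediate extensions $K$ of $\qq \subseteq F$ (resp.\ $\ff_p(x) \subseteq F$), of the Dedekind rings of integers $\overline{\zz}_K$ (resp.\ $\overline{\ff_p[x]}_K$); the system is integral with finite degrees of quotient fields, and a directed union of Pr\"ufer domains is Pr\"ufer, so $D$ is a Pr\"ufer domain that is the directed union of a directed integral system of Dedekind domains. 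Every overring of $D$ is a subring of $F$ and hence atomic by hypothesis, so Theorem~\ref{tool} forces $D$ to be Dedekind.

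\textbf{Reverse direction.} Assume the Dedekind hypothesis and let $R$ be any subring of $F$, with $K = \qf(R)$. In characteristic $0$, set $A_0 = \zz$. In characteristic $p$, if $R$ is algebraic over $\ff_p$ then each nonzero $a \in R$ lies in the finite subfield $\ff_p[a] \subseteq R$, so $1/a \in R$, making $R$ a field and trivially atomic; otherwise $R$ contains a transcendental $t$ over $\ff_p$ and we set $A_0 = \ff_p[t]$. In both remaining cases $A_0 \subseteq R$ and $\overline{A_0}_F$ is Dedekind by hypothesis. I first carry out a \emph{subfield reduction}: $\overline{A_0}_K = \overline{A_0}_F \cap K$ is Dedekind. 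This ring is one-dimensional Pr\"ufer by construction; for each maximal ideal $N$, lying-over in the integral extension $\overline{A_0}_K \subseteq \overline{A_0}_F$ produces a maximal ideal $M$ of $\overline{A_0}_F$ with $M \cap \overline{A_0}_K = N$, and $(\overline{A_0}_K)_N = (\overline{A_0}_F)_M \cap K$ has value group embedded in the discrete value group of the DVR upstairs, hence is itself a DVR, so $\overline{A_0}_K$ is almost Dedekind. Because every nonzero $a \in \overline{A_0}_K$ lies in only finitely many maximal ideals of $\overline{A_0}_F$ (Dedekind) and these contract onto the maximal ideals of $\overline{A_0}_K$ containing $a$, this set is finite, which together with almost Dedekind promotes $\overline{A_0}_K$ to Dedekind.

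To conclude, let $\overline{R}$ denote the integral closure of $R$ in $K$. Since $A_0 \subseteq R$ implies $\overline{A_0}_K \subseteq \overline{R}$, the ring $\overline{R}$ is an overring of the Dedekind domain $\overline{A_0}_K$, hence itself Dedekind by~\cite[Corollary~13.2]{rF73} and in particular Noetherian. A standard clearing-of-denominators argument on a monic integral relation shows $R^\times = \overline{R}^\times \cap R$, so any strict ascending chain of principal ideals in $R$ extends to a strict ascending chain in $\overline{R}$; Noetherianity terminates the chain, so $R$ satisfies ACCP and is atomic. The main obstacle is the subfield-reduction step, since in general the intersection of a Dedekind domain with a subfield of its quotient field is poorly behaved; one must pull the DVR property of each local ring and the finite-support property of prime ideals back individually through the integral extension $\overline{A_0}_K \subseteq \overline{A_0}_F$. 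Once this is in place, the remaining lift of ACCP through an integral extension is routine.
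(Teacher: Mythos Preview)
Your forward direction is essentially identical to the paper's: both invoke Proposition~\ref{prop:necessary conditions for HAFs} for the transcendence bound, recognise $\overline{\zz}_F$ (resp.\ $\overline{\ff_p[x]}_F$) as a Pr\"ufer domain that is the union of a directed integral system of Dedekind domains, and apply Theorem~\ref{tool}.

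Your reverse direction is correct but takes a genuinely longer route than the paper's. You pass to $K = \qf(R)$, prove a \emph{subfield reduction} showing $\overline{A_0}_K = \overline{A_0}_F \cap K$ is Dedekind (almost Dedekind via value-group restriction, then Dedekind via finiteness of primes over each element), and finally observe that the integral closure $\overline{R}$ of $R$ in $K$ is an overring of $\overline{A_0}_K$. The paper bypasses the subfield reduction entirely by working in $F$ rather than in $K$: it takes $\overline{S}_F$, the integral closure of the subring $S$ \emph{in $F$ itself}, notes that $\qf(\overline{S}_F) = F = \qf(\overline{\zz}_F)$ because $F$ is algebraic over $\qf(S)$, and concludes immediately that $\overline{S}_F$ is an overring of the Dedekind domain $\overline{\zz}_F$, hence Dedekind. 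The descent of ACCP from $\overline{S}_F$ to $S$ via $\overline{S}_F^\times \cap S = S^\times$ is then the same as yours. So the step you flag as ``the main obstacle'' is not actually needed: replacing $\overline{R} \subseteq K$ by $\overline{R}_F \subseteq F$ makes the overring property immediate. Your argument does have the incidental payoff of proving that $\overline{A_0}_K$ is Dedekind for every intermediate field $K$, which is a pleasant strengthening, but it is not required for the theorem as stated.
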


\begin{proof}
	(1) For the direct implications, suppose that $F$ is hereditarily atomic. It follows from Proposition~\ref{prop:necessary conditions for HAFs} that $F$ is an algebraic extension of $\qq$. Since $\zz$ is a Pr\"ufer domain with quotient field $\qq$, it follows that $\overline{\zz}_F$ is a (one-dimensional) Pr\"ufer domain \cite[Theorem~101]{iK74}. Note now that the set of all finite sub-extensions of $\overline{\zz}_F$ is a directed system of Dedekind domains. This directed system is clearly a directed integral system with directed union $\overline{\zz}_F$. As $F$ is hereditarily atomic, every overring of $\overline{\zz}_F$ is atomic and, therefore, Theorem~\ref{tool} guarantees that $\overline{\zz}_F$ is a Dedekind domain.
	
	Conversely, suppose that $F$ is an algebraic extension of $\qq$ such that $\overline{\zz}_F$ is a Dedekind domain. Let~$S$ be a subring of $F$. As $\overline{\zz}_F \subseteq \overline{S}_F$ holds, the fact that $F$ is algebraic over both $\qq$ and $\qf(S)$ guarantees that $\qf(\overline{\zz}_F) = F = \qf(\overline{S}_F)$. Therefore $\overline{S}_F$ is an overring of $\overline{\zz}_F$. Since $\overline{\zz}_F$ is a Dedekind domain, so is $\overline{S}_F$ \cite[Theorem~6.21]{LM71}. Thus, $\overline{S}_F$ is Noetherian and, in particular, it must satisfy ACCP. On the other hand, $\overline{S}_F^\times \cap S = S^\times$ because $S \subseteq \overline{S}_F$ is an integral extension. From this, one infers that~$S$ also satisfies ACCP. As a consequence, $S$ is atomic. Hence $F$ is hereditarily atomic.
	\smallskip
	
	(2) Suppose first that $F$ is hereditarily atomic. It follows from Proposition~\ref{prop:necessary conditions for HAFs} that the transcendence degree of $F$ over $\ff_p$ is at most~$1$. Assume that $F$ is not algebraic over $\ff_p$, and fix a transcendental $x \in F$ over $\ff_p$. Then $F$ is an algebraic extension of $\ff_p(x)$. Since $\ff_p[x]$ is a Pr\"ufer domain and $\overline{\ff_p[x]}_F$ is one-dimensional, we can mimic the argument in the first paragraph of this proof to conclude that $\overline{\ff_p[x]}_F$ is a Dedekind domain.
	
	For the reverse implications, we first observe that if $F$ is an algebraic extension of $\ff_p$ for some $p \in \pp$, then every subring of $F$ is a field, whence $F$ is hereditarily atomic. We suppose, therefore, that the transcendental degree of $F$ over $\ff_p$ is~$1$. Let $S$ be a subring of $F$. If every element of $S$ is algebraic over $\ff_p$, then $S$ is a field, and so atomic. Otherwise, let $x \in S$ be a transcendental element over $\ff_p$. Then $\ff_p[x]$ is a subring of $S$. Since $F$ is an algebraic extension of $\ff_p(x)$ and $\overline{\ff_p[x]}_F$ is a Dedekind domain, we can argue that $S$ is hereditarily atomic by simply following the lines of the second paragraph of this proof. Thus, $F$ is hereditarily atomic.
\end{proof}

We record the following corollary (actually a porism to Theorem~\ref{thm:meat}).

\begin{cor}\label{por}
	The field $F$ is hereditarily atomic if and only if each subring of $F$ is a BFD.
\end{cor}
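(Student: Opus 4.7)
The plan is to prove the two implications separately. The backward direction is immediate: every BFD is atomic by definition, so if every subring of $F$ is a BFD then $F$ is hereditarily atomic.

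For the forward direction, I would fix an arbitrary subring $S$ of the hereditarily atomic field $F$ and prove it is a BFD by repackaging the argument in the proof of Theorem~\ref{thm:meat}. The idea is first to locate $\overline{S}_F$ as a Dedekind overring of a ``base'' Dedekind domain, and then to descend the BFD property along the integral extension $S \subseteq \overline{S}_F$. Concretely, in characteristic zero Theorem~\ref{thm:meat}(1) gives that $\overline{\zz}_F$ is Dedekind, and the computation $\qf(\overline{\zz}_F) = F = \qf(\overline{S}_F)$ from its proof shows that $\overline{S}_F$ is an overring of $\overline{\zz}_F$, hence itself Dedekind by \cite[Theorem~6.21]{LM71}. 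In positive characteristic, either every element of $S$ is algebraic over $\ff_p$---in which case $S$ is already a field and thus vacuously a BFD---or $S$ contains some $x$ that is transcendental over $\ff_p$, and the analogous argument with $\overline{\ff_p[x]}_F$ in place of $\overline{\zz}_F$ shows that $\overline{S}_F$ is again Dedekind.

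With $\overline{S}_F$ Dedekind, hence Noetherian, and therefore a BFD by \cite[Proposition~2.2]{AAZ90}, I would then transfer the BFD property down to $S$ along the integral extension $S \subseteq \overline{S}_F$. The key observation is that integrality forces $\overline{S}_F^\times \cap S = S^\times$, so every $a \in \ii(S)$ is a nonunit of $\overline{S}_F$. Hence, given any $r \in S^*$ and any factorization $r = a_1 \cdots a_n$ with $a_i \in \ii(S)$, picking at least one $\overline{S}_F$-irreducible factor of each $a_i$ yields a factorization of $r$ in $\overline{S}_F$ of length at least $n$. Consequently $\sup \mathsf{L}_S(r) \le \sup \mathsf{L}_{\overline{S}_F}(r) < \infty$, and since $S$ is atomic (by hereditary atomicity of $F$), this makes $S$ a BFD.

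The main obstacle here is essentially cosmetic: the proof of Theorem~\ref{thm:meat} already carries out the integral-extension argument that descends ACCP from $\overline{S}_F$ to $S$, and the only new point is to recognize that comparing factorization lengths across the extension actually descends the stronger bounded-factorization property rather than merely ACCP.
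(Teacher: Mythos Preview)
Your proposal is correct and is precisely the elaboration the paper has in mind: the paper records Corollary~\ref{por} without proof, simply calling it a porism to Theorem~\ref{thm:meat}, and your argument spells out that porism by reusing the proof that $\overline{S}_F$ is Dedekind (hence Noetherian, hence a BFD by \cite[Proposition~2.2]{AAZ90}) and then descending the bounded-factorization property along the integral extension $S \subseteq \overline{S}_F$ via $\overline{S}_F^\times \cap S = S^\times$. The only minor wording issue is the phrase ``picking at least one $\overline{S}_F$-irreducible factor of each $a_i$'': what you actually use is that each nonunit $a_i$ factors completely into at least one irreducible in the atomic domain $\overline{S}_F$, so that $n \le \sum_i |\mathsf{z}_i| \in \mathsf{L}_{\overline{S}_F}(r)$; this is exactly what your inequality $\sup \mathsf{L}_S(r) \le \sup \mathsf{L}_{\overline{S}_F}(r)$ encodes.
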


\begin{cor} \label{cor:F(x) is HA when $F$ is an algebraic extension of F_p}
	For $p \in \pp$, let $F$ be an algebraic extension of $\ff_p$. Then the field $F(x)$ is hereditarily atomic.
\end{cor}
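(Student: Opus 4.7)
The plan is to verify the hypotheses of Theorem~\ref{thm:meat}(2) for the field $F(x)$; to avoid a notational collision with the indeterminate, I will write $t$ for the ``$x$'' appearing in that theorem. The transcendence-degree hypothesis is immediate: since $F$ is algebraic over $\ff_p$, the singleton $\{x\}$ is a transcendence basis of $F(x)$ over $\ff_p$. It remains to show that $\overline{\ff_p[t]}_{F(x)}$ is a Dedekind domain for every $t \in F(x)$. When $t$ is algebraic over $\ff_p$ we have $t \in F$ (since $F$ is algebraically closed in the purely transcendental extension $F(x)$), and $\overline{\ff_p[t]}_{F(x)}$ then reduces to the algebraic closure of $\ff_p$ in $F(x)$, which equals $F$ itself; this case is trivial, and the substantive work lies with $t$ transcendental over $\ff_p$.

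For such $t$, I would establish the key equality
\[
	\overline{\ff_p[t]}_{F(x)} = \overline{F[t]}_{F(x)}.
\]
The inclusion $\subseteq$ is immediate from $\ff_p[t] \subseteq F[t]$. For the reverse inclusion, given $y \in F(x)$ integral over $F[t]$, the coefficients appearing in a monic integrality relation for $y$ involve only finitely many elements of $F$; because $F$ is algebraic over the finite field $\ff_p$, these generate a finite subfield $F_0 \subseteq F$. Thus $y$ is integral over $F_0[t]$, which in turn is integral over $\ff_p[t]$ since $F_0/\ff_p$ is a finite extension, and transitivity of integrality then gives that $y$ is integral over $\ff_p[t]$.

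Once this equality is in hand, the conclusion follows from standard machinery. The ring $F[t]$ is a PID, and $F(x)/F(t)$ is a finite field extension, since writing $t = a(x)/b(x)$ in lowest terms with $a, b \in F[x]$ exhibits $x$ as a root of the polynomial $a(X) - t\,b(X) \in F(t)[X]$ of degree $\max(\deg a, \deg b)$. By the Krull-Akizuki theorem, $\overline{F[t]}_{F(x)}$ is a one-dimensional Noetherian domain; being integrally closed by construction, it is a Dedekind domain. Theorem~\ref{thm:meat}(2) then delivers the hereditary atomicity of $F(x)$.

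The main obstacle I anticipate is the equality $\overline{\ff_p[t]}_{F(x)} = \overline{F[t]}_{F(x)}$, whose nontrivial inclusion relies essentially on $F$ being \emph{algebraic} (and not merely a field extension) over $\ff_p$: this hypothesis is what forces the finitely many $F$-coefficients appearing in an integrality relation over $F[t]$ to lie inside a finite subfield, enabling the descent to integrality over $\ff_p[t]$. Without this feature, the strategy of comparing with the PID $F[t]$ would not allow us to conclude integrality over $\ff_p[t]$, and the argument would collapse.
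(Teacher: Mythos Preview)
Your proof is correct and follows the same overall strategy as the paper: both verify the hypothesis of Theorem~\ref{thm:meat}(2) via the identity $\overline{\ff_p[t]}_{F(x)} = \overline{F[t]}_{F(x)}$, proved using that every element of $F$ is integral over $\ff_p$. The paper first reduces to the case $F = \overline{\ff_p}$ (since $F(x)$ embeds in $\overline{\ff_p}(x)$ and hereditary atomicity passes to subrings) and then treats only the indeterminate $t = x$, where $\overline{F[x]}_{F(x)} = F[x]$ is immediate because $F[x]$ is integrally closed with quotient field $F(x)$. You skip that reduction and instead verify the Dedekind condition for \emph{every} $t \in F(x)$, which forces you to invoke Krull--Akizuki to handle a general transcendental $t$. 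Your version is arguably more thorough, since Theorem~\ref{thm:meat}(2) as stated asks for the condition at every $t$; the paper's reduction buys a cleaner computation at the single point $t = x$ but leaves the remaining $t$ implicit.
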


\begin{proof}
	It suffices to show that $F(x)$ is hereditarily atomic assuming that $F$ is the algebraic closure of~$\ff_p$. It is clear that $\overline{\ff_p[x]}_{F(x)} \subseteq \overline{F[x]}_{F(x)} = F[x]$ because $F[x]$ is integrally closed. Conversely, every element of $F[x]$ is integral over $\ff_p[x]$, and so $F[x] \subseteq \overline{\ff_p[x]}_{F(x)}$. Thus, $\overline{\ff_p[x]}_{F(x)} = F[x]$ is a Dedekind domain, and it follows from part~(2) of Theorem~\ref{thm:meat} that $F(x)$ is hereditarily atomic. 
\end{proof}

We quickly note that it is possible to have a Dedekind domain $D$ with quotient field $F$ such that~$F$ is algebraic, but not finite, over $\mathbb{Q}$ (resp. $\ff_p(x)$). First, we record a key result needed in the construction. This result is \cite[Theorem 42.5]{rG68}.

\begin{theorem}\label{gilmer}
	Let $R$ be a Dedekind domain with quotient field $K$, and let 
	\[
		\{P_i\}_{i=1}^r, \{Q_i\}_{i=1}^s, \{U_i\}_{i=1}^t
	\]
	be three collections of distinct maximal ideals of $R$ (with $r \ge 1$), each with finite residue field. Then there is a simple quadratic field extension $K(t)$ of~$K$ (with ring of integers $\overline{R}_{K(t)}$) such that~$t$ integral over~$R$ and separable over~$K$ and each~$P_i$ is inertial in $\overline{R}_{K(t)}$, each $Q_i$ ramifies in $\overline{R}_{K(t)}$, and each $U_i$ splits in $\overline{R}_{K(t)}$.
\end{theorem}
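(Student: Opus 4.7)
My plan is to realize $t$ as a root of $x^2 - d$ for a carefully chosen $d \in R$, using the Chinese Remainder Theorem to prescribe the desired local behavior at each of the finitely many listed primes. I first assume every $P_i$, $Q_j$, $U_k$ has odd residue characteristic. CRT applied to the ideal $\bigl(\prod_i P_i\bigr)\bigl(\prod_j Q_j^{2}\bigr)\bigl(\prod_k U_k\bigr)$ produces $d \in R$ satisfying the congruences: $\bar d$ is a nonzero non-square in each $R/P_i$ (such elements exist since the squares in a finite field of odd order form an index-two subgroup), $d \equiv \pi_j \pmod{Q_j^{2}}$ for some uniformizer $\pi_j \in Q_j \setminus Q_j^{2}$, and $\bar d$ is a nonzero square in each $R/U_k$. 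Since $r \ge 1$, the element $d$ is not a square in $K$ (a square in $K$ that is a unit at $P_1$ would reduce to a square in the residue field), so $K(\sqrt d)/K$ is a proper separable quadratic extension and $t := \sqrt d$ is integral over~$R$.

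The splitting behavior then follows by reducing the polynomial $f(x) = x^{2} - d$ modulo the relevant primes and invoking Dedekind--Kummer. At each $P_i$, $f \bmod P_i$ is irreducible over the finite field $R/P_i$ (as $\bar d$ is a non-square), so $P_i$ remains prime in $\overline{R}_{K(t)}$, giving inertia. At each $U_k$, $f \bmod U_k$ factors into two distinct linear factors, so $U_k$ splits. At each $Q_j$, $v_{Q_j}(d) = 1$ and any extension $w$ of $v_{Q_j}$ to $K(t)$ satisfies $w(t) = \tfrac{1}{2} w(d) = e/2$, forcing the ramification index $e$ to equal $2$; moreover $t$ is a local uniformizer above $Q_j$, which certifies that $R[t]$ locally equals the integral closure there, so Dedekind--Kummer is legitimately applied at the ramified primes as well.

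The main obstacle is the residue-characteristic-$2$ case, where $R/\mathfrak{p}$ has no non-squares (Frobenius is surjective) and $x^{2} - d \bmod \mathfrak{p}$ is never irreducible. One then has to replace the Kummer model with an Artin--Schreier model $t^{2} + at + b = 0$ whose local behavior at $\mathfrak{p}$ is governed by the class of $b/a^{2}$ in $R_{\mathfrak{p}}/\wp(R_{\mathfrak{p}})$, where $\wp(y) = y^{2} + y$: inertia and splitting at $\mathfrak{p}$ correspond respectively to $\overline{b/a^{2}} \notin \wp(R/\mathfrak{p})$ and $\overline{b/a^{2}} \in \wp(R/\mathfrak{p})$, whereas ramification demands $v_{\mathfrak{p}}(b/a^{2}) < 0$ and odd, achieved with $a, b \in R$ by forcing $v_{\mathfrak{p}}(a) > 0$. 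Assembling the Kummer and Artin--Schreier prescriptions into a single polynomial $f \in R[x]$ becomes one large simultaneous CRT problem for the pair $(a, b)$: take $a \equiv 1$ modulo every odd-residue-characteristic prime so the completed-square analysis still applies there, and give $a$ controlled positive valuation at the residue-characteristic-$2$ primes. Verifying that the resulting joint choice does not accidentally disrupt any prescribed behavior, and that $R[t]$ captures the integral closure locally at the ramified primes in both models, is where I expect most of the technical effort to lie.
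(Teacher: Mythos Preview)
The paper does not give its own proof of this theorem: it is quoted as \cite[Theorem~42.5]{rG68} and used as a black box in the example immediately following. So there is no in-paper argument to compare yours against.

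That said, your outline is the standard route to such a result and is sound where you carry it out in full. The odd-residue-characteristic part via $x^{2}-d$, CRT, and Dedekind--Kummer is correct, including your check that $d$ is not a square in $K$ (using the inert prime $P_1$) and that $R[t]$ is locally maximal at each $Q_j$ because $t$ is a uniformizer there. Passing to a general monic quadratic $x^{2}+ax+b$ at residue-characteristic-$2$ primes, with $a$ a local unit so that the discriminant $a^{2}-4b$ is a local unit and Dedekind--Kummer applies, is exactly the right move for the unramified behaviors; the Artin--Schreier criterion you state for inertia versus splitting is correct.

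One small gap: for ramification at a residue-characteristic-$2$ prime you say it suffices to force $v_{\mathfrak p}(a)>0$ in order to make $v_{\mathfrak p}(b/a^{2})$ negative and odd, but $v_{\mathfrak p}(b/a^{2})=v_{\mathfrak p}(b)-2v_{\mathfrak p}(a)$ is then negative and \emph{even} unless you also force $v_{\mathfrak p}(b)$ to be odd. Taking $v_{\mathfrak p}(a)\ge 1$ and $v_{\mathfrak p}(b)=1$ works and, more directly, makes $x^{2}+ax+b$ Eisenstein at $\mathfrak p$ (Newton polygon a single segment of slope $-1/2$), so $t$ is a local uniformizer and $R[t]$ is locally maximal there as well. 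With that adjustment your CRT assembly goes through.
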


\begin{ex}
	Let $R = R_0$ be a Dedekind domain, and suppose that $R$ has (only) countably many nonzero prime ideals $(\mathfrak{P}_n)_{n \in \nn}$, each of them with finite residue field. We can apply Theorem~\ref{gilmer} to build a quadratic field extension $K_1$ of $\text{qf}(R)$ such that $\mathfrak{P}_1$ is inertial in $R_1 := \overline{R}_{K_1}$ (and leave the set to ramify and the set to split both as empty). Since $R_1$ is Dedekind, we can apply Theorem~\ref{gilmer} to obtain a quadratic field extension $K_2$ of $\text{qf}(R_1)$ such that the primes of $R_1$ lying over $\mathfrak{P}_2$ and $\mathfrak{P}_1$ (which is only $\mathfrak{P}_1R_1$ itself) are inertial in $R_2 := \overline{R_1}_{K_2}$. In the $n^{\text {th}}$ step we construct a quadratic field extension $K_n$ of $\text{qf}(R_{n-1})$ such that the primes of $R_{n-1}$ lying over $\mathfrak{P}_1,\mathfrak{P}_2, \dots, \mathfrak{P}_n$ are inertial in $R_n := \overline{R_{n-1}}_{K_n}$. Then we can consider $D = \bigcup_{n \in \nn} R_n$. It follows from \cite[Corollary 42.2]{rG68} that $D$ is an almost Dedekind domain. Since any $\mathfrak{P}_n$ can be contained in only finitely many (in fact no more than $2^{n-1}$) prime ideals of $D$, we obtain that~$D$ is a Dedekind domain. However, $\text{qf}(D)$ is an infinite field extension of~$\text{qf}(R)$.
\end{ex}

\bigskip
%%%%%%%%%%%%%%%%%
%%%%%%%%%%%%%%%%%
\section{Polynomial-Like Rings}
\label{sec:polynomials}
\smallskip

The main purpose of this section is to study hereditary atomicity in rings of polynomials and rings of Laurent polynomials. To begin with, we will determine which are the fields whose polynomial rings are hereditarily atomic. Every hereditarily atomic domain we have seen so far lives inside a hereditarily atomic field. We will see that there are rings of polynomials that are hereditarily atomic domains but whose quotient fields are not hereditarily atomic and, therefore, they cannot be embedded into any hereditarily atomic field.
\smallskip

First, we note here that the distinction between atomic domains and domains satisfying ACCP is notoriously subtle, and just a few classes of atomic domains without ACCP have been found so far (see, for instance, \cite{BC19}, \cite{aG74}, and~\cite{aZ82}). The following proposition, which we will use later, is not hard to verify.

\begin{prop} \cite[Proposition~2.1]{aG74} \label{prop:ACCP subrings}
	Let $R$ be an integral domain satisfying ACCP. If a subring~$S$ of $R$ satisfies $S^\times = R^\times \cap S$, then $S$ also satisfies ACCP.
\end{prop}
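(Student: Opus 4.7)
The plan is to lift an arbitrary ascending chain of principal ideals of $S$ to a corresponding chain in $R$, invoke ACCP in $R$ to conclude stabilization up to associates in $R$, and then use the hypothesis $S^\times = R^\times \cap S$ to transfer that stabilization back down to $S$. The hypothesis is tailored precisely for this translation step, so the bulk of the work is bookkeeping.

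First I would start with an ascending chain of principal ideals $(s_1)_S \subseteq (s_2)_S \subseteq (s_3)_S \subseteq \cdots$ of $S$. Each containment $(s_n)_S \subseteq (s_{n+1})_S$ produces an element $t_n \in S$ with $s_n = s_{n+1} t_n$, and this same equation holds in $R$, so $\bigl((s_n)_R\bigr)_{n \in \nn}$ is an ascending chain of principal ideals of $R$. If every $s_n$ equals $0$, the chain in $S$ is already stationary; otherwise, because the chain is increasing, once some $s_n$ is nonzero all subsequent terms are nonzero, and I may pass to a tail to assume $s_n \neq 0$ for every $n$.

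Since $R$ satisfies ACCP, the lifted chain in $R$ stabilizes: there exists $N \in \nn$ such that $(s_n)_R = (s_{n+1})_R$ for every $n \ge N$. For such $n$ the elements $s_n$ and $s_{n+1}$ are associates in $R$, so combined with $s_n = s_{n+1} t_n$ and $s_{n+1} \neq 0$ (using that $R$ is a domain), this forces $t_n \in R^\times$. But $t_n \in S$, and the hypothesis $S^\times = R^\times \cap S$ therefore yields $t_n \in S^\times$. Hence $s_n$ and $s_{n+1}$ are associates in $S$, and $(s_n)_S = (s_{n+1})_S$ for every $n \ge N$, proving that $S$ satisfies ACCP.

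The main obstacle, to the extent there is one, is no deeper than recognizing that the hypothesis is designed exactly to convert ``associates in $R$'' into ``associates in $S$''; once the lift-and-descend framework is set up, the proof is essentially automatic, with only the trivial zero-element bookkeeping left to handle.
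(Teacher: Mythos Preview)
Your proof is correct. The paper does not actually supply its own proof of this proposition---it cites Grams and remarks that it ``is not hard to verify''---so there is no in-paper argument to compare against directly; however, the paper does prove a slight variant (Proposition~\ref{augACCPsubrings}) by showing that strict containments of principal ideals in $S$ remain strict when lifted to $R$, which is exactly the contrapositive of your lift-and-descend argument.
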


%The upshot of this is that intuitively, all domains that have all subrings satisfying ACCP should be ``close to" a polynomial extension (in any cardinal number of indeterminates) over an integral extension of $\mathbb{Z}$ (resp. $\mathbb{Z}_p[x]$).
%
%\begin{ex}
%	Note that $\zz[[x]]$ does not have this property and so has subrings that are not ACCP. Indeed consider some $u\in\mathbb{Z}[[x]]$ such that $u$ is not algebraic over $\mathbb{Q}[x]$ (and since $\mathbb{Z}[[x]]$ has uncountably many units, such elements exist). Now consider the subring $R := \zz[u,xu^{-n} \mid n \in \nn]$ of $\zz[[x]]$. Note that if $u^{-1} \in R$, then $u^{-1}=p(x,u,xu^{-1},\cdots,xu^{-k})$ for some polynomial $p \in \mathbb{Z}[t_1,t_2,\cdots, t_{k+2}]$. If $-N$ is the smallest integer that appears as a power of $u$ in the representation, multiplication by $u^N$ gives an algebraic equation for $u$ which is our desired contradiction. Hence as $u$ is a nonunit of $R$ but a unit of $\mathbb{Z}[[x]]$ we have that $R$ is not ACCP. We shall prove in Proposition~\ref{prop:power series rings are not HADs} that $R[[x]]$ is not hereditarily atomic for any domain $R$.
%\end{ex}

Using Proposition~\ref{prop:ACCP subrings}, one can produce examples of hereditarily atomic domains whose quotient fields are not hereditarily atomic.

\begin{ex} \label{ex:rings of polynomials over Z are HAD}
	For $n \in \nn$, consider the polynomial ring $R = \zz[x_1, \dots, x_n]$. Since $R^\times = \{\pm 1\}$, for each subring $S$ of $R$ we see that $S^\times = \{\pm 1\} = R^\times \cap S$. It follows now from Proposition~\ref{prop:ACCP subrings} that~$S$ satisfies ACCP and is, therefore, atomic. Thus, $R$ is hereditarily atomic. However, the quotient field of $R$ is not hereditarily atomic by Theorem~\ref{thm:meat}.
\end{ex}

\begin{ex} \label{ex:rings of polynomials over Z_2 that are HAD}
	For $n \in \nn$, the only unit of the polynomial ring $R = \ff_2[x_1, \dots, x_n]$ is $1$. Now we can proceed as in Example~\ref{ex:rings of polynomials over Z are HAD} to obtain that $R$ is hereditarily atomic even though Theorem~\ref{thm:meat} guarantees that the quotient field of $R$ is not hereditarily atomic when $n \ge 2$. 
\end{ex}

It is well known that if an integral domain $R$ satisfies ACCP (resp., is a BFD), then the ring of polynomials $R[x]$ over $R$ satisfies ACCP (resp., is a BFD). This is not the case for being atomic as proved by M. Roitman~\cite{mR93}. Similarly, the ring $R[x]$ may not be hereditarily atomic even when $R$ is hereditarily atomic. Indeed, the field~$\qq$ is hereditarily atomic by Theorem~\ref{thm:meat}, but the ring $\qq[x]$ contains the subring $\zz + x \qq[x]$, which is not atomic. We proceed to characterize the fields yielding hereditarily atomic polynomial rings.

\begin{prop} \label{prop:polynomial rings that are HAD}
	Let $F$ be a field. Then the following statements are equivalent.
	\begin{enumerate}
		\item[(a)] $F[x_1, \dots, x_n]$ is hereditarily atomic for any distinct indeterminates $x_1, \dots, x_n$.
		\smallskip
		
		\item[(b)] $F[x]$ is hereditarily atomic.
		\smallskip
		
		\item[(c)] $F$ is an algebraic extension of $\ff_p$ for some $p \in \pp$.
	\end{enumerate}
\end{prop}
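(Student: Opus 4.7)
The implication (a) $\Rightarrow$ (b) is immediate by taking $n=1$, so the real content lies in (b) $\Rightarrow$ (c) and (c) $\Rightarrow$ (a).

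For (b) $\Rightarrow$ (c) the plan is to argue contrapositively and exhibit, in each failing case, a non-atomic subring of $F[x]$. If $\mathrm{char}(F)=0$, then $\qq$ embeds into $F$, and the subring $\zz + x\qq[x]$ of $F[x]$ fails to be atomic by \cite[Corollary~1.4]{AeA99} since $\zz$ is not a field. If $\mathrm{char}(F)=p$ but $F$ contains an element $t$ transcendental over $\ff_p$, then $\ff_p(t)$ embeds into $F$, and the subring $\ff_p[t] + x\,\ff_p(t)[x]$ of $F[x]$ is again non-atomic by the same result. These two cases together rule out hereditary atomicity whenever (c) fails, exactly paralleling the constructions used in the proof of Proposition~\ref{prop:necessary conditions for HAFs}.

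For (c) $\Rightarrow$ (a), set $R := F[x_1,\dots,x_n]$. Since $R$ is a Noetherian UFD it satisfies ACCP, so by Proposition~\ref{prop:ACCP subrings} the plan is to verify, for every subring $S$ of $R$, the unit-matching condition $S^\times = R^\times \cap S$. Using $R^\times = F^\times$, the inclusion $S^\times \subseteq F^\times \cap S$ is automatic. For the reverse inclusion, the idea is to exploit that $F$ is algebraic over $\ff_p$ and that every subring $S$ of the characteristic-$p$ ring $R$ contains the prime field $\ff_p$: given $u \in F^\times \cap S$, the minimal polynomial of $u$ over $\ff_p$ has nonzero constant term, so $u^{-1}$ can be expressed as a polynomial in $u$ with coefficients in $\ff_p$, placing $u^{-1} \in \ff_p[u] \subseteq S$. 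Proposition~\ref{prop:ACCP subrings} then forces $S$ to satisfy ACCP and, a fortiori, to be atomic.

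The main subtlety I expect is this unit-matching step. What makes hypothesis (c) essential — rather than, say, the weaker assumption that $F$ is a finite-dimensional extension of its prime field — is that algebraicity of the units of $R$ over $\ff_p$, combined with the automatic containment $\ff_p \subseteq S$, is precisely what keeps the units of $R$ visible as units of every subring. This same mechanism is what breaks in (b) $\Rightarrow$ (c): introducing a transcendental element or working in characteristic zero permits subrings of the form $D + xK[x]$ in which a non-field $D$ sits beneath a large module of potential irreducibles, allowing a single element to admit arbitrarily refining factorizations and hence destroying atomicity.
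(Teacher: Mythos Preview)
Your proof is correct. The argument for (b) $\Rightarrow$ (c) matches the paper's essentially line for line: both exhibit the non-atomic subrings $\zz + x\qq[x]$ and $\ff_p[t] + x\,\ff_p(t)[x]$ via \cite[Corollary~1.4]{AeA99}. (The paper routes the positive-characteristic case through Theorem~\ref{thm:meat}, but your direct argument is equivalent.)

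Your treatment of (c) $\Rightarrow$ (a), however, is genuinely different from the paper's. The paper argues atomicity of an arbitrary subring $S$ directly: it observes that $F \cap S$ is a subring of the algebraic extension $F/\ff_p$ and hence a field, so any element of $S \setminus F$ that cannot be properly factored in $R$ outside of $F$ is irreducible in $S$; then, using total degree as a bound, it writes any $f \in S$ as a maximal-length product of elements of $S \setminus F$ and concludes each factor is irreducible. You instead verify the unit-matching condition $S^\times = R^\times \cap S$ (via the minimal-polynomial trick for algebraic elements of $F^\times$) and invoke Proposition~\ref{prop:ACCP subrings}. Your route is cleaner and yields the stronger conclusion that every subring of $F[x_1,\dots,x_n]$ satisfies ACCP, not merely atomicity---which is consonant with Corollary~\ref{por} and with the conjecture of Section~\ref{sec:hereditary ACCP}. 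The paper's approach, by contrast, is self-contained and explicitly constructs factorizations without appealing to ACCP of the ambient ring.
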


\begin{proof}
	(a) $\Rightarrow$ (b): This is clear.
	\smallskip
	
	(b) $\Rightarrow$ (c): Suppose that $F[x]$ is hereditarily atomic. As we have observed, $\ch(F)$ cannot be zero, as otherwise $F[x]$ would contain a copy of the non-atomic domain $\zz + x\qq[x]$. Let $p$ be the characteristic of $F$. Since every subring of $F$ is atomic, it follows from part~(b) of Theorem~\ref{thm:meat} that $F$ is an algebraic extension of either $\ff_p$ or $\ff_p(y)$. Note, however, that~$F$ cannot be an extension of $\ff_p(y)$ because, in such a case, $\ff_p[y] + x\ff_p(y)[x]$ would be a non-atomic subring of $F[x]$ by \cite[Corollary~1.4]{AeA99}. Hence $F$ must be an algebraic extension of $\ff_p$.
	\smallskip
	
	(c) $\Rightarrow$ (a): Suppose now that $F$ is an algebraic extension of the finite field $\ff_p$ for some $p \in \pp$, and set $R = F[x_1, \dots, x_n]$. Let~$S$ be a subring of $R$. If $S$ is a subring of $F$, then $S$ must be atomic by Theorem~\ref{thm:meat} (indeed, in this case, $S$ is a field). Assume, therefore, that $S$ is not a subring of $F$, and take $a \in S \setminus F$ satisfying that for all $g,h \in R$ with $a = gh$, either $g \in F$ or $h \in F$. Write $a = g h$ for some $g, h \in S$ and suppose, without loss of generality, that $g \in F$. Since $F \cap S$ is a subring of $F$, it must be a field, and so $g \in F \cap S$ implies that $g \in S^\times$. Thus, $a \in \ii(S)$. Finally, if $f \in S$, then we can write $f = a_1 \cdots a_m$ for $a_1, \dots, a_m \in S \setminus F$ taking $m \in \nn$ as large as it can be. In this case, $a_1, \dots, a_m$ must be irreducibles in~$S$, and so $a_1 \cdots a_m$ is a factorization of~$f$ in~$S$. Hence $S$ is atomic, and we can conclude that $R$ is hereditarily atomic.
\end{proof}

The following corollary is an immediate consequence of Theorem~\ref{thm:meat} and Proposition~\ref{prop:polynomial rings that are HAD}.

\begin{cor} \label{cor:polynomial rings of several variables over F_p are HAD}
	Let $F$ be an algebraic extension of $\ff_p$ for some $p \in \pp$. For $n \ge 2$, the ring of polynomials $F[x_1, \dots, x_n]$ is a hereditarily atomic domain whose quotient field is not hereditarily atomic.
\end{cor}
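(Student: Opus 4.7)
The corollary is an immediate assembly of the two cited results, so no substantively new argument is required; the plan is simply to combine them cleanly.

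For the assertion that $F[x_1, \dots, x_n]$ is hereditarily atomic, I would invoke the implication (c) $\Rightarrow$ (a) of Proposition~\ref{prop:polynomial rings that are HAD} directly, applied to the algebraic extension $F$ of $\ff_p$. That implication was proved for an arbitrary number of indeterminates, so it settles the first half without further work.

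For the assertion that the quotient field $F(x_1, \dots, x_n)$ is not hereditarily atomic when $n \ge 2$, I would argue by computing transcendence degrees. Since $F$ is algebraic over $\ff_p$ and $x_1, \dots, x_n$ are (by construction) algebraically independent over $F$, a routine transitivity argument shows that $x_1, \dots, x_n$ remain algebraically independent over $\ff_p$. Thus the transcendence degree of $F(x_1, \dots, x_n)$ over $\ff_p$ is exactly $n$, which is at least $2$. Now Proposition~\ref{prop:necessary conditions for HAFs}(2) (equivalently, the ``only if'' direction of Theorem~\ref{thm:meat}(2)) asserts that any hereditarily atomic field of characteristic $p$ has transcendence degree at most $1$ over $\ff_p$. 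Consequently $F(x_1, \dots, x_n)$ cannot be hereditarily atomic.

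There is no genuine obstacle in this proof: both halves follow mechanically from the cited statements. The only point requiring a brief remark is the algebraic-independence transitivity step used to pin down the transcendence degree over $\ff_p$, and this is entirely standard.
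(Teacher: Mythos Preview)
Your proposal is correct and matches the paper's own proof, which simply states that the corollary is an immediate consequence of Theorem~\ref{thm:meat} and Proposition~\ref{prop:polynomial rings that are HAD}. Your explicit mention of the transcendence-degree transitivity is a minor elaboration, but the argument is the same.
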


Although Noetherian and Krull domains are BFDs, they are not necessarily hereditarily atomic (see Corollary~\ref{cor: Noetherian/Krull domains not HAD}). However, as a consequence of Proposition~\ref{prop:polynomial rings that are HAD}, we can identify hereditarily atomic Noetherian and Krull domains of any dimension.

\begin{cor}  \label{cor: Noetherian/Krull domains HAD}
	For every $n \in \nn$ there exists a Noetherian/Krull domain of dimension $n$ that is hereditarily atomic.
\end{cor}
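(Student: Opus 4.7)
The plan is to exhibit an explicit family of examples by taking polynomial rings over finite fields. For each $n \in \nn$, fix any prime $p$ and set $R = \ff_p[x_1, \dots, x_n]$. Since $\ff_p$ is trivially an algebraic extension of itself, Proposition~\ref{prop:polynomial rings that are HAD} (applied with $F = \ff_p$) immediately yields that $R$ is hereditarily atomic. So the only remaining task is to verify that this ring has the right structural properties.

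For the Noetherian/Krull and dimension claims, I would invoke standard facts: by the Hilbert Basis Theorem, $R$ is Noetherian; since $R$ is a UFD (being a polynomial ring over a field), in particular it is a Noetherian integrally closed domain and hence a Krull domain. The Krull dimension of $\ff_p[x_1,\dots,x_n]$ is exactly $n$, which is standard.

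There is essentially no obstacle here: the work has been done in Proposition~\ref{prop:polynomial rings that are HAD}, and the corollary is just the observation that the examples produced there already sit inside the Noetherian/Krull world and can be taken to have any prescribed Krull dimension by varying the number of indeterminates. The proof will therefore be a one- or two-sentence citation assembly.
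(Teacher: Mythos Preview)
Your proposal is correct and matches the paper's proof essentially verbatim: the paper also takes $\ff_p[x_1,\dots,x_n]$, notes it is an $n$-dimensional Noetherian and Krull domain, and cites Proposition~\ref{prop:polynomial rings that are HAD} for hereditary atomicity.
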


\begin{proof}
	For each $n \in \nn$, the $n$-dimensional ring $\ff_p[x_1, \dots, x_k]$ is both a Noetherian domain and a Krull domain, and it is also hereditarily atomic by Proposition~\ref{prop:polynomial rings that are HAD}.
\end{proof}
\smallskip

For every $n \in \nn$, it follows from Corollary~\ref{cor:polynomial rings of several variables over F_p are HAD} that the integral domain $\ff_p[x_1, \dots, x_n]$, which is isomorphic to the monoid ring of the rank-$n$ free commutative monoid $\nn_0^n$ over the field $\ff_p$, is hereditarily atomic. The next example exhibits another class of monoids whose monoid rings over~$\ff_p$ are hereditarily atomic.

First, let us extend some terminology from the setting of integral domains to that of monoids. The notions of irreducibility and atomicity carry over to any monoid $M$ as follows: $a \in M \setminus U(M)$ is \emph{irreducible} if $a = xy$ for some $x,y \in M$ implies that either $x \in U(M)$ or $U(M)$, and $M$ is \emph{atomic} if every non-invertible element factors into irreducibles. Moreover, we can extend in the obvious way the notion of a factorization to atomic monoids, and so we can define a \emph{bounded factorization monoid} (\emph{BFM}) mimicking the definition of a BFD. Finally, a subset $I$ of a monoid $M$ is called an \emph{ideal} if $I + M \subseteq I$, and so we can consider monoids satisfying ACCP. As for integral domains, every BFM satisfies ACCP \cite[Corollary~1.3.3]{GH06} and every monoid satisfying ACCP is atomic \cite[Proposition~1.1.4]{GH06}.

\begin{ex}
	Fix $p \in \pp$. Let $M$ be an additive submonoid of $\rr_{\ge 0}$ such that $0$ is not a limit point of $M^\bullet$. Since~$0$ is not a limit point of $M^\bullet$, it follows from~\cite[Proposition~4.5]{fG19} that $M$ is a BFM, and so the monoid ring $\ff_p[M]$ is a BFD by \cite[Theorem~13]{AJ15}. In particular, $\ff_p[M]$ satisfies ACCP. Now suppose that $S$ is a subring of $\ff_p[M]$. As the only invertible element of $M$ is $0$, we see that $\ff_p[M]^\times = \ff_p^\times$, and so the equality $S^\times = \ff_p^\times = \ff_p[M]^\times \cap S$ holds. Then it follows from Proposition~\ref{prop:ACCP subrings} that~$S$ satisfies ACCP and is, therefore, atomic. As a result, $\ff_p[M]$ is a hereditarily atomic domain. Several aspects of the atomicity of $F[M]$ for additive submonoids $M$ of $\qq_{\ge 0}$ (and any field $F$) were recently studied by the second author in~\cite{fG21}.
\end{ex}
\medskip

Given a polynomial ring $F[x]$ over a field, subrings of the form $D + xF[x]$, where $D$ is a subring of~$F$, are special cases of the well-investigated $D+M$ construction, which was first studied by Gilmer \cite[Appendix II]{rG68} in the context of valuation domains, and later by J. Brewer and E. A. Rutter~\cite{BR76} for arbitrary integral domains. For an integral domain $T$, let~$K$ and~$M$ be a subfield of $T$ and a nonzero maximal ideal of $T$, respectively, such that $T = K + M$. Now let $D$ be a subring of $K$, and set $R = D + M$. It is known that $R$ is atomic (resp., satisfies ACCP) if and only if $T$ is atomic (resp., satisfies ACCP) and $D$ is a field \cite[Proposition~1.2]{AAZ90}. If $T$ has characteristic zero, then $R$ cannot be hereditarily atomic.

\begin{prop}
	Let $T$ be an integral domain, and let $K$ and $M$ be a subfield of $T$ and a nonzero maximal ideal of $T$, respectively, such that $T = K + M$. For a subring $D$ of~$K$, set $R = D + M$. If $T$ has characteristic zero, then $R$ is not hereditarily atomic.
	%	
	%	Then~$R$ is an FFD if and only if $T$ is an FFD, $D$ is a field, and $K^\ast/D^\ast$ is finite.
\end{prop}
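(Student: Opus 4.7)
The plan is to argue by contradiction. Assuming $R$ is hereditarily atomic, I will exhibit a subring of $R$ that is not atomic; the target will be (a copy of) $\zz + y\qq[y]$, the paradigmatic non-atomic $D+M$ example invoked in the excerpt via \cite[Corollary~1.4]{AeA99}. To place such a copy inside $R$, I need two ingredients inside $R$: a copy of $\qq$ to serve as the ``coefficient field'', and an element transcendental over $\qq$ to serve as the ``variable''. Both will come from $\ch(T) = 0$ together with the fact that $K \cap M = 0$, which follows because $K$ is a field and $M$ is a proper ideal of $T$.

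First, I would use that hereditary atomicity of $R$ implies atomicity of $R$, and then invoke the $D+M$ characterization \cite[Proposition~1.2]{AAZ90} recalled in the paragraph preceding the statement to conclude that $D$ must be a field. Since $\ch(T) = 0$ forces $\ch(D) = 0$, and $D$ is a field, one obtains $\qq \subseteq D$.

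Second, fix any nonzero $m \in M$ and verify that $m$ is transcendental over $\qq$. The key point is the direct-sum decomposition $T = K \oplus M$ as $K$-vector spaces. If $m$ satisfied an irreducible monic polynomial $x^k + c_{k-1}x^{k-1} + \cdots + c_0 \in \qq[x]$, then evaluating at $m$ and solving for $c_0$ places $c_0$ into $M \cap \qq \subseteq M \cap K = \{0\}$; irreducibility of the minimal polynomial then forces $k = 1$ and $m = 0$, a contradiction. I expect this transcendence step to be the main subtlety, though it is short: the technical heart is the observation that $K \cap M = 0$ annihilates any constant term arising from an algebraic relation for $m$.

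Finally, I would consider the subring $S = \zz + m\qq[m]$ of $R$. It is contained in $R = D + M$ because $\zz \subseteq D$ and, for every $q \in \qq$ and $k \ge 1$, the element $q m^k$ lies in $D \cdot M \subseteq M$; closure of $S$ under sums and products is routine. Since $m$ is transcendental over $\qq$, the $\qq$-algebra map $\qq[y] \to \qq[m]$ sending $y \mapsto m$ is an isomorphism, which restricts to a ring isomorphism $\zz + y\qq[y] \to S$. By \cite[Corollary~1.4]{AeA99}, the source is not atomic, hence neither is $S$. This contradicts hereditary atomicity of $R$, and so $R$ cannot be hereditarily atomic.
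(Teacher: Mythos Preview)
Your proposal is correct and follows essentially the same approach as the paper: reduce to the case where $D$ is a field containing $\qq$ via \cite[Proposition~1.2]{AAZ90}, use $K \cap M = 0$ to show that any nonzero $m \in M$ is transcendental over $\qq$, and then exhibit a copy of the non-atomic ring $\zz + x\qq[x]$ inside $R$. The only cosmetic differences are that the paper argues by cases rather than by contradiction and observes the slightly stronger fact that $m$ is transcendental over all of $D$, but the substance is identical.
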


\begin{proof}
	Suppose that $T$ (and so $R$) has characteristic zero. If $D$ is not a field, then it follows from \cite[Proposition~1.2]{AAZ90} that $R$ is not even atomic. Therefore we assume that $D$ is a field extension of~$\qq$. We claim that every nonzero element of $M$ is transcendental over $D$. Indeed, if for some nonzero $m \in M$ there existed a polynomial $f(x) = \sum_{j=0}^n c_j x^j \in D[x]$ with $c_0 \neq 0$ and $f(m) = 0$, then the equality $0 = c_0 + m \sum_{j=1}^n c_j m^{j-1}$ would contradict that $D+M$ is a direct sum of abelian groups. As a result,~$R$ contains an isomorphic copy of $\qq[x]$ as a subring, namely, $\qq[m]$ for any nonzero $m \in M$. Since $\qq[x]$ is not hereditarily atomic (it contains $\zz + x \qq[x]$ as a subring), neither $R$ nor $T$ can be hereditarily atomic.
\end{proof}

If~$T$ is hereditarily atomic (which can only happen if $\text{char}(T)$ is prime), then $R$ is clearly hereditarily atomic (even if $D$ is not a field). On the other hand, if $R$ is hereditarily atomic, then it is atomic and so it follows from \cite[Proposition~1.2]{AAZ90} that $T$ is atomic and $D$ is a field. However, unlike for the property of being atomic, the fact that $R$ is hereditarily atomic does not suffice to guarantee that $T$ is hereditarily atomic. The following example sheds some light upon this last observation.

\begin{ex}
	Fix $p \in \pp$, and let $x$ be an indeterminate over $\ff_p$. Let $T$ denote the polynomial ring $\ff_p(x)[y]$ in the indeterminate $y$ over the field $\ff_p(x)$, and let $R := \ff_p + y \ff_p(x)[y]$ be the subring of~$T$ of the form $D+M$, where $D = \ff_p$ and $M = y \ff_p(x)[y]$. Since $T$ satisfies ACCP (indeed, $T$ is a UFD), \cite[Proposition~1.2]{AAZ90} ensures that~$R$ also satisfies ACCP. It follows from \cite[Lemma~4.17]{AG21} that $R^\times = T^\times \cap R = \ff_p^\times$. Now suppose that~$S$ is a subring of~$R$, and observe that $R^\times \cap S =  \ff_p^\times = S^\times$. Then Proposition~\ref{prop:ACCP subrings} guarantees that $S$ satisfies ACCP and is, therefore, atomic. Hence $R$ is hereditarily atomic. However, it follows from Proposition~\ref{prop:polynomial rings that are HAD} that $T$ is not hereditarily atomic.
\end{ex}
\smallskip

We have just identified in Proposition~\ref{prop:polynomial rings that are HAD} a class of polynomial rings that are hereditarily atomic. In the next proposition we fully characterize the rings of Laurent polynomials that are hereditarily atomic.

\begin{theorem} \label{thm:Laurent polynomial rings}
	Let $R$ be an integral domain. Then $R[x,x^{-1}]$ is hereditarily atomic if and only if~$R$ is an algebraic extension of $\ff_p$ for some $p \in \pp$.
\end{theorem}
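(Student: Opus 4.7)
For the reverse implication, I would first observe that any integral domain algebraic over a field is itself a field, so the hypothesis that $R$ is algebraic over $\ff_p$ forces $R$ to be a field. Then Corollary~\ref{cor:F(x) is HA when $F$ is an algebraic extension of F_p} yields that $R(x)$ is hereditarily atomic, and since $R[x,x^{-1}] \subseteq R(x)$, every subring of $R[x,x^{-1}]$ is a subring of $R(x)$, hence atomic.

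For the forward implication, I plan to argue contrapositively: whenever $R$ fails to be algebraic over some $\ff_p$, I will produce a non-atomic subring of $R[x,x^{-1}]$. A uniform construction handles both failure modes. Given a UFD $T \subseteq R$ and a nonzero non-unit $d \in T$, consider the pullback
\[
S \;=\; T + (x-d)\,T[x,x^{-1}] \;=\; \{\, f \in T[x,x^{-1}] : f(d) \in T \,\} \;\subseteq\; R[x,x^{-1}].
\]
If $\ch(R) = 0$, I take $T = \zz$ (using $\zz \hookrightarrow R$) and $d = 2$; if $\ch(R) = p \in \pp$ and $t \in R$ is transcendental over $\ff_p$, I take $T = \ff_p[t]$ and $d = t$. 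In either setting $T[x,x^{-1}]$ is a UFD in which $x-d$ is a prime element.

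The heart of the argument is the claim that $x - d$ has no factorization into irreducibles in $S$. Suppose $x-d = p_1 \cdots p_k$ with each $p_i \in \ii(S)$. Since $x-d$ is prime in the UFD $T[x,x^{-1}]$, exactly one $p_i$ (say $p_1$) is an associate of $x-d$, say $p_1 = u(x-d)$ with $u \in T[x,x^{-1}]^\times$. I would then exhibit the alternative factorization
\[
p_1 \;=\; u(x-d) \;=\; \bigl(u\,x^{-1}(x-d)\bigr) \cdot x
\]
in $S$, noting that both factors lie in $S$: the first vanishes at $x=d$, and $x(d) = d \in T$. To derive a contradiction, I must verify that neither factor is a unit of $S$. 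The first has two distinct monomial terms and so lies outside $T[x,x^{-1}]^\times$, and hence outside $S^\times$. For the second, the critical point is that since $d$ is a non-unit of $T$, we have $d^{-1} \notin T$, so $x^{-1}(d) = d^{-1} \notin T$ forces $x^{-1} \notin S$ and therefore $x \notin S^\times$. This contradicts the irreducibility of $p_1$, so $S$ is non-atomic.

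The delicate step is precisely the verification that $x \notin S^\times$, which is where the non-invertibility of $d$ in $T$ is essential; this is the only place the hypothesis on $d$ really bites. Once the non-atomicity of the witness $S$ is established, the two cases together rule out $\ch(R) = 0$ and the presence of any transcendental over $\ff_p$ in $R$, so $R$ must be a field algebraic over some $\ff_p$, as required.
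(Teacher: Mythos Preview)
Your proof is correct, and the forward implication takes a genuinely different route from the paper's.

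For the reverse implication, both you and the paper invoke Corollary~\ref{cor:F(x) is HA when $F$ is an algebraic extension of F_p} in the same way.

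For the forward implication, the paper treats the two failure modes with separate ad hoc constructions: in characteristic zero it builds the subring $\zz[x^n, 2/x^n \mid n \in \nn]$ and shows that $2$ does not factor into irreducibles there; in characteristic $p$ with a transcendental $w \in R$ it builds $\ff_p[x, w/x^n \mid n \in \nn_0]$ and shows that $w$ does not factor. In each case the argument proceeds by analyzing the possible monomial shapes of the putative irreducible factors. Your approach replaces these two computations by a single pullback $S = T + (x-d)T[x,x^{-1}] = \text{ev}_d^{-1}(T)$, and the non-atomicity of $S$ falls out almost immediately from the primeness of $x-d$ in the UFD $T[x,x^{-1}]$ together with the single observation $x^{-1} \notin S$. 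This buys uniformity and brevity: one construction, one short argument, no case-by-case monomial bookkeeping. The paper's constructions, by contrast, are more explicit and self-contained (they do not lean on the pullback formalism or on the UFD property of $T[x,x^{-1}]$), which may make them easier to adapt in settings where no convenient UFD $T$ is available inside $R$.

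The step you flag as delicate---that $x \notin S^\times$ because $x^{-1}(d) = d^{-1} \notin T$---is indeed the crux, and your justification is sound: it is exactly here that $d$ being a non-unit of $T$ is used, and the choices $(T,d) = (\zz,2)$ and $(T,d) = (\ff_p[t],t)$ both satisfy this.
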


\begin{proof}
	For the reverse implication, suppose that $R$ is an algebraic extension of $\ff_p$ for some $p \in \pp$. In particular, we can assume that $R$ is a subfield of the algebraic closure $K$ of $\ff_p$. Then $R[x,x^{-1}]$ is a subring of $K(x)$. Since $K(x)$ is hereditarily atomic by Corollary~\ref{cor:F(x) is HA when $F$ is an algebraic extension of F_p}, we obtain that $R[x,x^{-1}]$ is also hereditarily atomic.
	\smallskip
	
	For the direct implication, suppose that $R[x,x^{-1}]$ is hereditarily atomic. We will argue first that~$R$ cannot have characteristic zero. Suppose, towards a contradiction, that $\text{char}(R) = 0$. Then $R[x, x^{-1}]$ contains $\zz[x, x^{-1}]$ as a subring. Now consider the subring
	\[
		T := \zz[x^n, 2/x^n \mid n \in \nn]
	\]
	of $\zz[x,x^{-1}]$. One can readily verify that $x^{-1} \notin T$. This, along with the inclusion $T^\times \subseteq \{\pm x^n \mid n \in \zz\}$, implies that $T^\times = \{\pm 1\}$. As a consequence, $2/x^j = x(2/x^{j+1})$ ensures that $2/x^j \notin \ii(T)$ for any $j \in \nn_0$. Since~$T$ is a subring of $R[x,x^{-1}]$, it must be atomic. Therefore we can write $2 = a_1(x) \cdots a_n(x)$ for some $a_1(x), \dots, a_n(x) \in \ii(T)$, where $n \ge 2$ because $2 \notin T^\times \cup \ii(T)$. Since the monoid
	\[
		M := \{cx^n \mid c \in \zz \setminus \{0\} \text{ and } n \in \zz\}
	\]
	is a divisor-closed submonoid of the multiplicative monoid $\zz[x,x^{-1}] \setminus \{0\}$, it follows that $a_j(x) \in M$ for every $j \in \ldb 1,n \rdb$. After relabeling and taking associates, we can assume that $a_1(x) = 2x^{s_1}$ and $a_j(x) = x^{s_j}$ for every $j \in \ldb 2,n \rdb$, where $s_1, \dots, s_n \in \zz$ satisfy $s_1 + \dots + s_n = 0$. The fact that $a_1(x) \in \ii(T)$ guarantees that $s_1 \ge 1$. Thus, there must be a $k \in \ldb 2,n \rdb$ such that $s_k < 0$. However, in this case $a_k(x) = x^{s_k}$ would be a unit of $T$, which is a contradiction.
	\smallskip
	
	Hence $\text{char}(R) = p$ for some $p \in \pp$, and so $R$ contains $\ff_p$ as its prime subfield. We will argue that~$R$ is an algebraic extension of $\ff_p$. Assume, by way of contradiction, that there exists $w \in R$ that is transcendental over $\ff_p$. Let us prove that the subring
	\[
		S := \ff_p[x, w/x^n \mid n \in \nn_0]
	\]
	of $R[x,x^{-1}]$ is not atomic, which will yield the desired contradiction.
	
	If $x$ were a unit of $S$, then $x^{-1} = \sum_{i=0}^k g_i(x) w^i$ for some $g_0 \in \ff_p[x]$ and $g_1, \dots, g_k \in \ff_p[x,x^{-1}]$, and so the fact that $w$ is transcendental over $\ff_p(x)$ (as an element of the extension $\qf(R)(x)$) would imply that $x g_0(x) = 1$. Thus, $x \notin S^\times$. Similarly, if $w/x^n$ were a unit of $S$ for some $n \in \nn_0$, then $x^n/w = \sum_{i=0}^k g_i(x) w^i$ for some $g_0, \dots, g_k \in \ff_p[x,x^{-1}]$ and, after clearing denominators, we would obtain $F(x,w) = 0$ for some nonzero $F$ in the polynomial ring $\ff_p[X,W]$, contradicting that $\{x,w\}$ is algebraically independent over $\ff_p$. Thus, $w/x^n \notin S^\times$ for any $n \in \nn_0$.
	
	We proceed to argue that $w$ does not factor into irreducibles in $S$. Observe first that for every $m \in \nn_0$ the element $w/x^m$ is not irreducible in $S$ because $w/x^m = x(w/x^{m+1})$ and, as we have already checked, $x, w/x^{m+1} \notin S^\times$. In particular, $w$ is not irreducible in $S$. Now write
	\begin{equation} \label{eq:product}
		w = \prod_{i=1}^n f_i\Big(x, w, \frac wx, \dots, \frac w{x^k} \Big) %b(u, x, x/u, \dots, x/u^t)
	\end{equation}
	in~$S$ for some $k \in \nn_0$, $n \in \nn_{\ge 2}$, and $f_1, \dots, f_n \in \ff_p[X, W_0, \dots, W_k]$ such that $f_i(x,w, w/x, \dots, w/x^k)$ is not a unit of $S$ for any $i \in \ldb 1,n \rdb$. Observe that for every $i \in \ldb 1,n \rdb$, there is a unique $\ell_i \in \zz$ such that $g_i := x^{-\ell_i} f_i \in \ff_p[x,w]$ and $g_i(0,w) \neq 0$. Set $\ell = \ell_1 + \dots + \ell_n$. It follows from~\eqref{eq:product} that $g_1(x,w) \cdots g_n(x,w) = x^{-\ell} w$, which implies that $\ell = 0$. Therefore $w = g_1(x,w) \cdots g_n(x,w)$ in $\ff_p[x,w]$. Since $\ff_p[x,w]$ is a UFD and $w$ is irreducible in $\ff_p[w,x]$, we can assume, after a possible relabeling of subindices, that $g_1 = \alpha_1 w$ for some $\alpha_1 \in \ff_p^\times$ and $g_i = \alpha_i \in \ff_p^\times$ for every $i > 1$. Thus, $f_1 = \alpha_1 x^{\ell_1}w$ and $f_i = \alpha_i x^{\ell_i}$ for every $i > 1$. Now we observe that $f_1$ is not irreducible in $S$ because $f_1 = \alpha_1 x^N (w/x^{N-\ell_1})$, where $N := |\ell_1| + 1$, and neither $\alpha_1 x^N$ nor $w/x^{N-\ell_1}$ is a unit of $S$. As a consequence, $w$ does not factor into irreducibles in $S$, and so $S$ is a subring of $R[x,x^{-1}]$ that is not atomic, contradicting that $R[x,x^{-1}]$ is hereditarily atomic. Hence $R$ is an algebraic extension of $\ff_p$, which concludes the proof.
\end{proof}

In contrast to the class of polynomial rings and the class of Laurent polynomial rings, which contain many hereditarily atomic domains (see Example~ \ref{ex:rings of polynomials over Z are HAD} and Proposition~\ref{prop:polynomial rings that are HAD}), the class of power series rings does not contain any hereditarily atomic domain. We conclude the paper arguing this observation.

\begin{prop} \label{prop:power series rings are not HADs}
	If $R$ is an integral domain, then $R\ldb x \rdb$ is not hereditarily atomic.
\end{prop}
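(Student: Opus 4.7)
My plan is to reduce to Theorem~\ref{thm:Laurent polynomial rings} by exhibiting inside $R\ldb x \rdb$ a subring isomorphic to a Laurent polynomial ring $D[T, T^{-1}]$ over an integral domain $D$ that fails to be an algebraic extension of any finite field. Such a Laurent polynomial ring is not hereditarily atomic by Theorem~\ref{thm:Laurent polynomial rings}, hence contains a non-atomic subring; via the isomorphism, this non-atomic subring sits inside $R\ldb x \rdb$.

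If $\ch(R) = 0$, I take $\alpha := 1 + x \in R\ldb x \rdb^\times$ (whose inverse $\sum_{n \ge 0}(-x)^n$ clearly lies in $R\ldb x \rdb$). The element $\alpha$ is transcendental over $\zz$: for any nonzero $P \in \qq[T]$, the series $P(1+x)$ equals the polynomial $P(1+x) \in \qq[x]$, of the same degree as $P$ and therefore nonzero. Thus $\zz[\alpha, \alpha^{-1}]$, viewed as a subring of $R\ldb x \rdb$, is isomorphic to $\zz[T, T^{-1}]$; since $\zz$ has characteristic zero, it is not algebraic over any $\ff_p$, so Theorem~\ref{thm:Laurent polynomial rings} yields a non-atomic subring inside it.

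If $\ch(R) = p$ is a prime, I instead need a unit $\alpha \in R\ldb x \rdb$ transcendental over $\ff_p(x) \subseteq \qf(R\ldb x \rdb)$; then $\ff_p[x, \alpha, \alpha^{-1}]$, as a subring of $R\ldb x \rdb$, is isomorphic to $\ff_p[x][T, T^{-1}]$. The latter fails to be hereditarily atomic by Theorem~\ref{thm:Laurent polynomial rings} because $x$ is transcendental over $\ff_p$, so $\ff_p[x]$ is not an algebraic extension of any $\ff_{q}$. Existence of such an $\alpha$ follows from a cardinality count: the units $\{1 + xg : g \in R\ldb x \rdb\}$ have cardinality $|R|^{\aleph_0} \ge 2^{\aleph_0}$, whereas the algebraic closure of the countable field $\ff_p(x)$ inside $\qf(R\ldb x \rdb)$ is countable, leaving uncountably many unit transcendentals.

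The main obstacle is producing the transcendental unit in the positive-characteristic case; the cardinality argument above is clean but non-constructive. One could alternatively take $\alpha := 1 + \sum_{n \ge 0} x^{n!}$ and invoke Christol's theorem on $p$-automatic series to verify that $\alpha$ is transcendental over $\ff_p(x)$.
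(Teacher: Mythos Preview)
Your proof is correct and follows essentially the same strategy as the paper's: reduce to Theorem~\ref{thm:Laurent polynomial rings} by locating inside $R\ldb x\rdb$ a Laurent polynomial ring over a base that is not an algebraic extension of a finite field, with a cardinality count supplying the needed transcendental unit. Your characteristic-zero case is slightly more explicit (taking $\alpha = 1+x$ directly rather than invoking cardinality), and in positive characteristic you use $x$ itself as the base transcendental and find a unit transcendental over $\ff_p(x)$ in one step, whereas the paper first finds a unit $u$ transcendental over $\ff_p$ and then a second element $y$ transcendental over $\ff_p(u)$; but the underlying idea is the same.
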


\begin{proof}
	Suppose, by way of contradiction, that $R\ldb x \rdb$ is hereditarily atomic. Let $R'$ be the prime subring of $R$. Since $R'$ is either finite or countable, the polynomial ring $R'[X]$ is also countable. Therefore the algebraic closure of $R'$ inside $R\ldb x \rdb$ is countable. On the other hand, it is clear that $R\ldb x \rdb$ has uncountably many units. As a result, there must be a unit $u$ of $R \ldb x \rdb$ that is not algebraic over the ring $R'[x]$. Then $R'[u,u^{-1}]$ is a subring of $R \ldb x \rdb$ that is isomorphic to the ring of Laurent polynomials in one variable over $R'$. It follows from Theorem~\ref{thm:Laurent polynomial rings} that $R'$ is an algebraic extension of $\ff_p$, and so $R' = \ff_p$ because $R'$ is the prime subring of $R$. Since $\ff_p[u, u^{-1}]$ is a countable set, we can argue as we did before to deduce that there is an element $y \in R \ldb x \rdb$ that is transcendental over the ring $\ff_p[u,u^{-1}]$. This clearly implies that $y$ is transcendental over $\ff_p$ and that both $u$ and $u^{-1}$ are transcendental over $\ff_p[y]$. Since $\ff_p[y]$ is not an algebraic extension of $\ff_p$, it follows from Theorem~\ref{thm:Laurent polynomial rings} that the Laurent polynomial ring $\ff_p[y][u,u^{-1}]$ is not hereditarily atomic. Hence we conclude that $R \ldb x \rdb$ is not hereditarily atomic.
\end{proof}

In contrast to Corollary~ \ref{cor: Noetherian/Krull domains HAD}, now we can identify Noetherian and Krull domains of any finite Krull dimension that are hereditarily atomic.
	
\begin{cor} \label{cor: Noetherian/Krull domains not HAD}
	For every $k \in \nn$, there exists a Noetherian/Krull domain of dimension $k$ that is not hereditarily atomic.
\end{cor}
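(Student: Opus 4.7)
The plan is to exhibit, for each $k \in \nn$, the power series ring in $k$ variables over a field as a Noetherian Krull domain of Krull dimension $k$ that fails to be hereditarily atomic, by a direct appeal to Proposition~\ref{prop:power series rings are not HADs}.

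Concretely, fix any field $F$ (say $F = \qq$) and set $R_k := F\ldb x_1, \dots, x_k \rdb$. I will verify three facts about $R_k$. First, $R_k$ is Noetherian: this follows by iterating the standard result that $S\ldb x \rdb$ is Noetherian whenever $S$ is, starting from the field $F$. Second, $R_k$ has Krull dimension exactly $k$; this is a classical computation, based on the fact that $R_k$ is a regular local ring with maximal ideal $(x_1, \dots, x_k)$. Third, $R_k$ is a Krull domain, since $R_k$ is a Noetherian, integrally closed domain (indeed, a regular local ring is a UFD by the Auslander--Buchsbaum theorem, and every UFD is Krull).

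It remains to show that $R_k$ is not hereditarily atomic. For this, I would write $R_k = R_{k-1}\ldb x_k \rdb$ when $k \ge 2$ (and $R_1 = F\ldb x_1 \rdb$ directly) and apply Proposition~\ref{prop:power series rings are not HADs} to the integral domain $R_{k-1}$ (respectively, $F$). The proposition guarantees that no power series ring over an integral domain is hereditarily atomic, and the conclusion follows at once.

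There is no real obstacle here; the content of the corollary is a packaging of Proposition~\ref{prop:power series rings are not HADs} together with the standard dimension/Noetherianity/Krull facts about $F\ldb x_1, \dots, x_k \rdb$. The only point requiring the slightest care is the verification that $R_k$ is Krull (as opposed to merely Noetherian), which is handled by invoking that regular local rings are UFDs.
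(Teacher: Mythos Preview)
Your proposal is correct and takes essentially the same approach as the paper: both use $R_k = \qq\ldb x_1,\dots,x_k\rdb$, invoke standard facts that it is Noetherian, Krull, and $k$-dimensional, and appeal to Proposition~\ref{prop:power series rings are not HADs} to conclude it is not hereditarily atomic. The only cosmetic difference is that the paper observes $\qq\ldb x_1\rdb$ is a non-hereditarily-atomic subring of $R_k$, whereas you apply the proposition directly to $R_k = R_{k-1}\ldb x_k\rdb$; both are immediate.
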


\begin{proof}
	For each $k \in \nn$, the $k$-dimensional ring $R := \qq\ldb x_1, \dots, x_k \rdb$ is both a Noetherian domain and a Krull domain \cite[Theorem 12.4(iii)]{hM86}. In addition, $\qq\ldb x \rdb$ is a subring of $R$ that is not hereditarily atomic by Proposition~\ref{prop:power series rings are not HADs}. Thus, $R$ is not hereditarily atomic.
\end{proof}

\bigskip
%%%%%%%%%%%%%%%%%%%%%%%%%%
%%%%%%%%%%%%%%%%%%%%%%%%%%
\section{Hereditary Atomicity VS Hereditary ACCP}
\label{sec:hereditary ACCP}
\smallskip

It is well known (but historically a point of contention) that the ACCP condition is strictly stronger than the atomic condition. The first example illustrating this fact is due to Grams~\cite{aG74} (correcting an error in~\cite{pC68}). Further constructions have been given since then, and references to such constructions, including two recent ones using pullbacks of rings and monoid algebras, were pointed out in the introduction.

We say that an integral domain $R$ is \emph{hereditarily ACCP} if every subring of $R$ satisfies ACCP. It is clear that if $R$ is hereditarily ACCP, then it must be hereditarily atomic. Given the history of these concepts, it is natural to ask if the class consisting of hereditarily atomic domains coincides with that  of hereditarily ACCP domains. This motivates the following conjecture.

\begin{conj}\label{conjecture}
	For an integral domain $R$, the following conditions are equivalent.
	\begin{enumerate}
		\item[(a)] $R$ is hereditarily atomic.
		\smallskip
		
		\item[(b)] $R$ is hereditarily ACCP.
	\end{enumerate}
\end{conj}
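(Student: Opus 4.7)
The implication (b) $\Rightarrow$ (a) is immediate: every domain satisfying ACCP is atomic, so if every subring of $R$ satisfies ACCP, every subring of $R$ is atomic. The substantive direction is (a) $\Rightarrow$ (b), and my plan begins with a small reduction: it suffices to prove the statement ``every hereditarily atomic domain satisfies ACCP.'' Indeed, if that is known and $S$ is any subring of a hereditarily atomic $R$, then every subring of $S$ is a subring of $R$ and hence atomic, so $S$ is itself hereditarily atomic and therefore satisfies ACCP. The main tool for the reduced statement will be Proposition~\ref{prop:ACCP subrings}: if $R \subseteq T$ with $T$ satisfying ACCP and $R^\times = T^\times \cap R$, then $R$ satisfies ACCP.

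With this reduction and tool in hand, I would run a case analysis driven by the structural results already established. First, if the hereditarily atomic $R$ embeds into a hereditarily atomic field $F$, then the proof of Theorem~\ref{thm:meat} already carries out the desired argument: $\overline{R}_F$ is an overring of the Dedekind domain $\overline{\mathbb{Z}}_F$ (respectively $\overline{\mathbb{F}_p[x]}_F$), hence Dedekind and therefore ACCP, and integrality of $R\subseteq \overline{R}_F$ yields $R^\times=\overline{R}_F^\times\cap R$, so Proposition~\ref{prop:ACCP subrings} gives ACCP for $R$. Second, if $R$ is a subring of $F[x_1,\ldots,x_n]$ with $F$ algebraic over some $\mathbb{F}_p$, then $F\cap R$ is a subring of the algebraic extension $F$ of $\mathbb{F}_p$, hence a field, so $R^\times = F^\times\cap R = F[x_1,\ldots,x_n]^\times\cap R$; the ambient ring is a UFD, and Proposition~\ref{prop:ACCP subrings} again transfers ACCP down to $R$. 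The same template handles subrings of $\mathbb{Z}[x_1,\ldots,x_n]$ and $\mathbb{F}_p[x_1,\ldots,x_n]$ exactly as in Example~\ref{ex:rings of polynomials over Z are HAD}.

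The principal obstacle lies with hereditarily atomic domains outside these favorable niches, and in particular with subrings of Laurent polynomial rings $F[x,x^{-1}]$ over algebraic extensions $F$ of $\mathbb{F}_p$ (the class forced by Theorem~\ref{thm:Laurent polynomial rings}). For such $R$ the ambient UFD has unit group containing every power of $x$, and a subring $S$ may contain $x$ without containing $x^{-1}$ (e.g.\ $S=\mathbb{F}_p[x]\subseteq\mathbb{F}_p[x,x^{-1}]$), so Proposition~\ref{prop:ACCP subrings} does not apply directly to the ambient ring. My remedy would be to exploit the $\mathbb{Z}$-grading: for $S\subseteq F[x,x^{-1}]$ one can consider the ``nonnegative part'' $S_{\ge 0}:=S\cap F[x]$, show that $S=S_{\ge 0}[t^{-1}]$ for some $t\in S_{\ge 0}$ (or a suitable localization), and then bootstrap ACCP for $S$ from ACCP for $S_{\ge 0}$, which lives inside the polynomial case already handled.

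The genuinely hard part is the remaining general case: a hypothetical counterexample $R$ would have to be hereditarily atomic yet fail ACCP, and hence could not be Noetherian, Krull, a BFD, or embeddable in any hereditarily atomic field, nor fit the polynomial or Laurent patterns above. My approach here would be to extract, from an infinite strictly ascending chain $(r_n)\subseteq R$ of nonassociates with $r_{n+1}\mid r_n$, a carefully chosen subring $S\subseteq R$ in which the successive quotients $r_n/r_{n+1}$ become genuine nonunits; then atomic factorizations of $r_0$ in $S$ would have to terminate the chain, producing the sought contradiction with hereditary atomicity by exhibiting a nonzero nonunit of $S$ with no factorization into irreducibles. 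I expect this last step to be the real barrier, since it asks one to convert an ACCP failure in $R$ into an atomicity failure in a tailor-made subring, and the conjecture may well require entirely new methods or genuine counterexamples for the fully general case.
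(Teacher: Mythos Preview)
The statement you are attempting to prove is a \emph{Conjecture} in the paper, explicitly left open: the authors write that it ``is still open'' and devote Section~\ref{sec:hereditary ACCP} to partial progress and to profiling a hypothetical counterexample. There is no paper's proof to compare against, and your proposal is, as you yourself concede in the final paragraph, not a complete argument. Your case analysis covers only subrings of certain specific ambient rings (hereditarily atomic fields, polynomial rings over algebraic extensions of $\mathbb{F}_p$, and so on), and nothing guarantees that a general hereditarily atomic domain embeds into one of these. Incidentally, your separate Laurent-polynomial case is redundant: Corollary~\ref{cor:F(x) is HA when $F$ is an algebraic extension of F_p} shows that $F(x)$ is a hereditarily atomic field whenever $F$ is algebraic over some $\mathbb{F}_p$, so $F[x,x^{-1}]\subseteq F(x)$ already falls under your first case, and Corollary~\ref{por} (every subring of a hereditarily atomic field is a BFD) finishes it without any grading argument.

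The paper's partial progress is organized differently. Rather than working through ambient-ring classes, the final theorem of Section~\ref{sec:hereditary ACCP} lists seven structural hypotheses on $R$, each of which forces the equivalence; most are proved by constructing, under the given hypothesis, an explicit non-atomic subring of any $R$ that fails ACCP---precisely the maneuver your last paragraph sketches but does not carry out in general. The paper's conditions (a unit not integral over the prime subring together with a further transcendental element, transcendence-degree constraints, a saturation condition on the set of elements supporting infinite ascending chains, etc.) amount to a profile of what any counterexample must avoid. Neither your outline nor the paper settles the conjecture.
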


We remark that this conjecture is not without historical motivation. For example, if $R\subseteq T$ is an extension of rings, then it is known that each of the first two conditions below implies the next one:
\begin{enumerate}
	\item[(a)] $R\subseteq T$ is a going up (GU) extension;
	\smallskip
	
	\item[(b)] $R\subseteq T$ is a lying over (LO) extension;
	\smallskip
	
	\item[(c)] $R\subseteq T$ is a survival extension.
\end{enumerate}
In addition, no two of the previous three conditions are equivalent. On the other hand, if we replace ``extension" with ``pair" in the same three conditions, then they become equivalent statements (we say that $R\subseteq T$ is an $X$-\emph{pair} if for any intermediate extensions $R\subseteq A\subseteq B\subseteq T$, the inclusion $A\subseteq B$ is an $X$-extension). The interested reader is directed to~\cite{CD03} and~\cite{D81} for more details.

Although Conjecture~\ref{conjecture} is still open, in this section we will reduce it to some specialized cases. We outline an approach that profiles the general properties of a canonical counterexample given that any exists. %We begin by assuming that~$R$ is a hereditarily atomic domain that is not hereditarily ACCP. Here is an observation that (potentially) reduces the task. 
It is interesting to notice that the veracity of Conjecture \ref{conjecture} would show that any example of an atomic domain that is not ACCP must contain a non-atomic subring (and if Conjecture \ref{conjecture} proves false, this shows exactly where to look for examples).

\begin{prop}
The following conditions are equivalent.
\begin{enumerate}
	\item[(a)] Conjecture~\ref{conjecture} holds.
	\smallskip
	
	\item[(b)] Every integral domain that does not satisfy ACCP contains a subring that is not atomic.
\end{enumerate}
\end{prop}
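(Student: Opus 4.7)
The proof is essentially a direct unwinding of definitions using the transitivity of the ``subring of'' relation, so the plan is quite short. I would split it into the two implications.

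For (a) $\Rightarrow$ (b), the plan is contrapositive. Suppose $R$ is an integral domain that does not satisfy ACCP. Since $R$ is a subring of itself, $R$ is not hereditarily ACCP. Invoking Conjecture~\ref{conjecture}, which we are assuming holds, $R$ fails to be hereditarily atomic as well, so $R$ has a subring that is not atomic. This is exactly statement~(b).

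For (b) $\Rightarrow$ (a), one direction of the conjecture (hereditarily ACCP implies hereditarily atomic) is immediate from the fact that ACCP implies atomicity for any integral domain. For the converse, I would argue contrapositively: suppose $R$ is not hereditarily ACCP, so that some subring $S \subseteq R$ fails ACCP. By hypothesis (b), $S$ contains a subring $S'$ that is not atomic. Since $S' \subseteq S \subseteq R$, $S'$ is a subring of $R$, and therefore $R$ is not hereditarily atomic. This gives the remaining direction of the conjecture.

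There is no real obstacle here — the content of the proposition is the observation that ``having a non-atomic subring'' is automatically inherited upward through subring containments, which lets one transfer a local pathology inside any non-ACCP subring back to the ambient ring. In particular, no new machinery beyond the definitions of hereditary atomicity, hereditary ACCP, and the trivial fact that ACCP implies atomicity is required.
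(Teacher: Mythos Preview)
Your proof is correct and follows essentially the same approach as the paper's, which likewise argues each implication by observing that a non-ACCP (sub)ring must harbor a non-atomic subring and then passes this up via transitivity of subring containment. One minor remark: your label ``contrapositive'' for (a) $\Rightarrow$ (b) is a misnomer---the argument you actually give is direct---but the logic is sound.
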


\begin{proof}
	(a) $\Rightarrow$ (b): Assume Conjecture~\ref{conjecture} is true, and let $R$ be an integral domain that does not satisfy ACCP. Clearly, $R$ is not hereditarily ACCP and, under our assumption, this means that~$R$ is not hereditarily atomic.
	\smallskip
	
	(b) $\Rightarrow$ (a): If every integral domain without the ACCP condition contains a non-atomic subring, then no hereditarily atomic domain can have a subring without the ACCP condition.
\end{proof}

Here is a slight variant of \cite[Proposition~2.1]{aG74} (the original result by Grams  appears previously as our Proposition \ref{prop:ACCP subrings}).

\begin{prop}\label{augACCPsubrings}
	Let $R$ be an integral domain satisfying ACCP. If a subring~$S$ of $R$ has the property that every unit of $R$ is integral over $S$, then $S$ also satisfies ACCP.
\end{prop}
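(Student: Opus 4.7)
The plan is to deduce this variant directly from Proposition~\ref{prop:ACCP subrings} by showing that the new hypothesis actually implies the old one, namely that $S^\times = R^\times \cap S$. One inclusion is trivial. For the other, I would take an arbitrary $u \in R^\times \cap S$ and verify that $u^{-1} \in S$. The key observation is that $u^{-1}$ is itself a unit of $R$, so the hypothesis yields an integral equation
\[
u^{-k} + b_{k-1} u^{-(k-1)} + \cdots + b_1 u^{-1} + b_0 = 0
\]
with $b_0, \dots, b_{k-1} \in S$. Multiplying by $u^{k-1}$ and isolating $u^{-1}$ gives
\[
u^{-1} = -\bigl(b_{k-1} + b_{k-2}\, u + \cdots + b_0\, u^{k-1}\bigr) \in S,
\]
because $u$ and the $b_i$ all lie in $S$. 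Hence $u \in S^\times$, establishing $R^\times \cap S \subseteq S^\times$.

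Once the equality $S^\times = R^\times \cap S$ is in hand, Proposition~\ref{prop:ACCP subrings} applies verbatim and delivers ACCP in~$S$. I do not anticipate any real obstacle: the whole content of the argument is the classical fact that in an integral extension the inverse of a ``unit that happens to live downstairs'' must also live downstairs, which is a routine manipulation of a single monic polynomial identity.

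An equivalent but slightly longer strategy would be to rerun Grams's own proof directly: starting with a chain $(a_1) \subseteq (a_2) \subseteq \cdots$ in $S$, use ACCP in $R$ to see that beyond some index $N$ one has $a_n = u_n a_N$ with $u_n \in R^\times$; then the relations $a_N = s_N s_{N+1} \cdots s_{n-1} a_n$ coming from the chain (with each $s_i \in S$) and cancellation force $u_n^{-1} = s_N \cdots s_{n-1} \in S$; finally, the integrality hypothesis applied to $u_n$ (combined with $u_n^{-1} \in S$) yields $u_n \in S$, hence $u_n \in S^\times$, so the chain stabilizes in $S$. This route reproduces the argument on the nose rather than invoking Proposition~\ref{prop:ACCP subrings}, but the reduction above is cleaner and I would prefer it.
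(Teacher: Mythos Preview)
Your proposal is correct and is essentially the paper's own argument: both reduce to showing $R^\times \cap S = S^\times$ via the observation that if $c \in S$ and $c^{-1}$ is integral over $S$, then $c^{-1} \in S[c] = S$, after which ACCP transfers from $R$ to $S$. The paper phrases this as ``strict containments among principal ideals survive in $R$'' and inlines the verification rather than citing Proposition~\ref{prop:ACCP subrings}, but the content is identical.
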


\begin{proof}
	We only need to show that strict containments among principal ideals survive in $R$. To this end, suppose that $(a)\subsetneq (b)$ in $S$,  and take $c \in S$ with $a=bc$. If $c$ is a unit in $R$, then $c^{-1}$ must be integral over $S$ by hypothesis and, hence, a unit itself in~$S$ (this is because $c^{-1}$ is integral over $S$ if and only if $c^{-1}\in S[c]=S$), which is a contradiction. Hence the desired inclusion remains strict in~$R$.
\end{proof}

Let $R$ be an integral domain. For any ascending chain of ideals $(I_n)_{n \in \nn}$ of $R$, we say that $(I_n)_{n \in \nn}$ \emph{ascends} from~$I_1$. Recall that a multiplicative subset $S$ of $R$ is saturated if $S$ is a divisor-closed submonoid of $R^*$.
%
%\begin{defn}\label{AR}
%	Let $R$ be an integral domain that does not satisfy the ACCP. Then there exists a strictly ascending chain of principal ideals, namely,
%	of $R$
%	\[
%		(x) \subsetneq \Big( \frac{x}{y_1} \Big) \subsetneq \cdots \subsetneq \Big( \frac{x}{y_1y_2\cdots y_n} \Big) \subsetneq\cdots
%	\] 
%	for some nonzero nonunit $x  \in R$ and a sequence $(y_n)_{n \in \nn}$ whose terms are nonzero nonunits in $R$ satisfying that $x \in \bigcap_{n \in \nn} (y_1 \cdots y_n)$. Let $Z$ denote the prime subring of $R$, then we define
%	\[
%		A_R:=Z[y_1,y_2,\dots, y_n,\dots]+xR
%	\]
%\end{defn}

We are in a position to prove our final result.

\begin{theorem}
	Let $R$ be an integral domain with quotient field $K$. Let $Z$ be the prime subring of~$R$, and let~$Q$ be the quotient field of $Z$. For each of the following conditions, $R$ is hereditarily atomic if and only if $R$ is hereditarily ACCP.
	\begin{enumerate}
		\item There exists $u \in R^\times$ such that $u$ is not integral over $Z$, and there exists $x\in R$ such that $x$ is transcendental over $Q(u)$.
		\smallskip
		
		\item The field $K$ has positive characteristic, and it is algebraic over $Q$. 
		\smallskip
		
		\item  One of the following conditions hold:
		\begin{itemize}
			\item $K$ has positive characteristic, $R$ satisfies ACCP, and the transcendence degree of~$K$ over~$Q$ is not $1$;
			\smallskip
			
			\item $K$ has characteristic zero, $R$ satisfies ACCP, and the transcendence degree of~$K$ over~$Q$ is at least $2$.
		\end{itemize}
		\smallskip
		
		\item There exists a sequence $(y_n)_{n \in \nn}$ whose terms are nonunits in $R$ with an $x \in \bigcap_{n \in \nn} (y_1 \cdots y_n)$ that is transcendental over the field $Q(y_n \mid n \in \nn)$.
		\smallskip
		
		\item There exists a strictly ascending chain $(x_n)_{n \in \nn_0}$ of principal ideals such that $x_0$ is transcendental over the field $Q\big(\frac{x_n}{x_{n+1}} \mid n \in \nn_0 \big)$.
		\smallskip
		
		\item There exists a strictly ascending chain of principal ideals of $R$
		\[
			(x) \subsetneq \Big( \frac{x}{y_1} \Big) \subsetneq \cdots \subsetneq \Big( \frac{x}{y_1y_2\cdots y_n} \Big) \subsetneq\cdots
		\] 
		for some nonzero nonunit $x  \in R$ and a sequence $(y_n)_{n \in \nn}$ whose terms are nonzero nonunits in~$R$ and $x \in \bigcap_{n \in \nn} (y_1 \cdots y_n)$ such that $Z[y_n \mid n \in \nn] \bigcap P = (0)$, where $P\subset R$ is the ideal  $\displaystyle\Big( x,\frac{x}{y_1},\cdots,\frac{x}{y_1y_2\cdots y_n},\cdots\Big)$.
		\smallskip
		
		\item The multiplicative subset of $R$ defined by 
		\[
			\{z\in R^* \mid \text{$(z)=R$ or there is an infinite chain of distinct principal ideals ascending from }(z)\}
		\]
		is a saturated set that is not contained in $R^\times$.
		\color{black}
	\end{enumerate}
\end{theorem}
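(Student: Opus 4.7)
The plan is to focus on the non-trivial implication hereditarily atomic $\Rightarrow$ hereditarily ACCP, since the converse is immediate from the fact that ACCP implies atomicity. The seven conditions split naturally into two groups. For conditions (1), (4), (5), (6), and (7), I will construct a non-atomic subring of $R$, so that hereditary atomicity fails and the biconditional holds vacuously. For conditions (2) and (3), I will lift the ACCP property of $R$ to every subring using Propositions \ref{prop:ACCP subrings} and \ref{augACCPsubrings}.

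For condition (1), the strategy adapts the non-atomicity argument from the proof of Theorem \ref{thm:Laurent polynomial rings}: using the unit $u \in R^\times$ not integral over $Z$ together with $x \in R$ transcendental over $Q(u)$, I will produce a subring of $R$ (roughly of the form $Z[u, x/u^n \mid n \in \nn_0]$ in the positive characteristic case, with a suitable modification in characteristic zero parallel to the $\zz[x^n, 2/x^n \mid n \in \nn]$ example from that proof) in which a distinguished element admits no factorization into irreducibles. For conditions (4), (5), and (6), the hypothesized strictly ascending chain of principal ideals of the form $(x) \subsetneq (x/y_1) \subsetneq \cdots$ combined with the transcendence of the top element (over the field generated by the $y_n$'s or by the ratios $x_n/x_{n+1}$) supplies exactly the ingredients needed for the analogous non-atomicity argument. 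For condition (7), the saturation of $U$ together with the existence of a nonunit $z_0 \in U$ yields, by iteration, a factorization $z_0 = y_1 y_2 \cdots y_n z_n$ in which each $y_i$ and each $z_n$ is again a nonunit in $U$; the subring $Z[y_n, z_n \mid n \in \nn]$ then witnesses the non-atomicity of $z_0$, with saturation playing the role that transcendence plays in the other cases.

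For condition (2), $K$ is algebraic over $\ff_p$, so every element of $K^\ast$ has finite multiplicative order; hence $R$ and every one of its subrings is a field, which trivially satisfies hereditary ACCP. For condition (3), I separate into sub-cases by the transcendence degree of $K$ over $Q$. When the characteristic of $K$ is positive and the transcendence degree is $0$, $K$ is algebraic over $\ff_p$ and we reduce to condition (2). Otherwise the transcendence degree is at least $2$, and the key claim is that under hereditary atomicity every unit $u \in R^\times$ is integral over $Z$; indeed, if $u \in R^\times$ were not integral over $Z$, the transcendence degree bound would yield $x \in K$ transcendental over $Q(u)$, and writing $x = a/b$ with $a, b \in R$ at least one of $a, b$ would lie in $R$ and be transcendental over $Q(u)$, so the pair $(u, a)$ (or $(u, b)$) would realize condition (1) and contradict hereditary atomicity. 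Since every subring $S \subseteq R$ contains the prime subring $Z$, every unit of $R$ is then integral over $S$, and Proposition \ref{augACCPsubrings} (combined with the ACCP hypothesis on $R$) yields ACCP for $S$. Thus $R$ is hereditarily ACCP.

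The main technical obstacle I anticipate is constructing the non-atomic subring for conditions (4)--(7), where verifying that the offending element admits no factorization into irreducibles requires careful polynomial-level bookkeeping, using either the transcendence hypothesis or, for condition (7), the saturation property of $U$. Condition (7) is the most delicate, as the saturation property has to substitute entirely for the explicit transcendental ingredient used in the other cases.
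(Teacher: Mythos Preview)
Your overall strategy matches the paper's---show the biconditional vacuously for (1), (4)--(7) by exhibiting a non-atomic subring, and prove hereditary ACCP directly for (2), (3)---but two of your case treatments have genuine gaps, and one differs in a way worth noting.

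\textbf{Condition (6).} You lump (6) with (4) and (5) and say the argument will use ``the transcendence of the top element,'' but condition~(6) has \emph{no} transcendence hypothesis. Its replacement is the hypothesis $Z[y_n \mid n \in \nn] \cap P = (0)$, and the paper exploits exactly this: it forms $T = Z[y_n \mid n \in \nn] + P$, uses the intersection condition to show $P$ is a \emph{prime} ideal of $T$, and then argues that any irreducible divisor of $x$ in $T$ lying in $P$ is a finite $R$-combination of $x, x/y_1, \dots, x/(y_1\cdots y_n)$ and hence divisible by $y_{n+1}$---a contradiction. Your proposal as written does not reach this argument.

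\textbf{Condition (7).} Your plan is to pass to a subring $Z[y_n, z_n \mid n \in \nn]$ and argue $z_0$ does not factor there. The difficulty is that saturation of the set $U$ is a property in $R$: a putative irreducible factor $a$ of $z_0$ in your subring does lie in $U$ (since it divides $z_0$ in $R$), so $(a)_R$ sits at the bottom of an infinite chain in $R$; but the witnesses to that chain need not belong to your subring, so you cannot conclude $a$ is reducible \emph{there}. The paper sidesteps this completely: it observes that saturation already forces $R$ itself to be non-atomic, since any nonunit $s \in U$ has every nonunit divisor again in $U$, and an element of $U$ is never irreducible in $R$.

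\textbf{Condition (1).} Your Laurent-style construction with a characteristic split is plausible, but note that in characteristic zero a unit $u$ not integral over $\zz$ can still be algebraic (e.g.\ $u=1/2$), so the positive-characteristic ring $Z[u, x/u^n]$ can fail because $u^{-1}$ may already lie in $Z[u]$. The paper avoids the case split entirely with a single $D+M$-type ring $S = Z[u^{-1}] + xZ[u,u^{-1}][x]$: the key point is that $u$ not integral over $Z$ forces $u^{-1}$ to be a nonunit of $Z[u^{-1}]$, uniformly in all characteristics, and then $x = u^{-1}(ux)$ together with the divisor-closed monomial monoid finishes the argument.

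Your treatment of (2) is correct and in fact slightly more direct than the paper's (which invokes Corollary~\ref{por}); your (3) matches the paper.
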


\begin{proof}
We remark at the outset that, as noted before, hereditarily ACCP implies hereditarily atomic. So for our seven items, we show hereditarily atomic implies hereditarily ACCP and the usual approach will be to show produce an non-atomic subring of a domain that is not ACCP.

	(1) Take $u\in R^\times$ such that $u$ is not integral over $Z$, and then select $x \in R$ such that~$x$ is transcendental over $Q(u)$. It suffices to show that $R$ has a non-atomic subring $S$. Towards this end, let~$S$ be the subring $Z[u^{-1}]+xZ[u,u^{-1}][x]$ of $R$. Since $x$ is transcendental over $Q(u)$, it functions as a polynomial variable. Proving that $S$ is not atomic amounts to showing that $x$ cannot be factored into irreducibles in $S$. First, note that $u^{-1}$ is not a unit of $Z[u^{-1}]$ as otherwise $f(u^{-1}) = u$ for some polynomial $f$ over $Z$, and so $u$ would be a root of the monic polynomial $x^{1 + \deg f} - x^{\deg f} f(x^{-1})$ with coefficients in $Z$, which is not possible because $u$ is not integral over $Z$. Hence $u^{-1} \notin S^\times$, and so the equality $x = u^{-1}(xu)$ ensures that $x$ is not irreducible. Because the multiplicative monoid consisting of all nonzero monomials of $S$ is a divisor-closed submonoid of $S^*$, every potential factorization of~$x$ in~$S$ must contain a factor of the form $cx$ for some $c \in Z[u,u^{-1}]$, which is not irreducible in $S$ because~$x$ is not irreducible. Hence $\mathsf{Z}_S(x)$ is empty.
%(1) Take $u\in R^\times$ such that $u$ is not integral over $Z$, and then select $x \in R$ such that~$x$ is transcendental over $Q(u)$. It suffices to show that $R$ has a non-atomic subring $S$. Towards this end, let $S$ be the subring $Z[u]+xZ[u,u^{-1}][x]$ of $R$. Since $x$ is transcendental over $Q(u)$, it functions as a polynomial variable. Proving that $S$ is not atomic amounts to showing that $x$ cannot be factored into irreducibles in $S$. First, note that $u^{-1} \notin S$ as otherwise $uf(u) - 1 = 0$ for some polynomial $f$ over $Z$, which is not possible because $u$ is transcendental over $Z$. Hence $u \notin S^\times$, and so the equality $x = u(xu^{-1})$ ensures that $x$ is not irreducible. Because the multiplicative monoid consisting of all nonzero monomials of $S$ is a divisor-closed submonoid of $S^*$, every potential factorization of $x$ in $S$ must contain a factor of the form $cx$ for some $c \in Z[u,u^{-1}]$, which is not irreducible in $S$ because $x$ is not irreducible. Hence $\mathsf{Z}_S(x)$ is empty.
%
%Additionally, if $x = (c_1 + f_1)(c_2 + f_2)$ for some $c_1, c_2 \in Z[u]$ and $f_1, f_2 \in xZ[u,u^{-1}][x]$, then the transcendence of $x$ guarantees that one of $a,b$ must be zero. If we say that $a=0$ (without loss of generality), then $\alpha$ cannot be irreducible. We see then that all the irreducible factors of~$x$ must have a nonzero contribution from $Z[u]$, which completes our argument.
\smallskip

%(1) Take $u\in R^\times$ such that $u$ is not integral over $Z$, and then select $x \in R$ such that~$x$ is transcendental over $Q(u)$. Now consider the subring $Z[u]+xZ[u,u^{-1}][x]$ of $R$. Since $x$ is transcendental over $Q(u)$, it functions as a polynomial variable. We will argue that $x$ cannot be factored into irreducibles in $Z[u]+xZ[u,u^{-1}][x]$, and so $Z[u]+xZ[u,u^{-1}][x]$ is not atomic. First, note that as $x=u^k(xu^{-k})$ for any $k$, and the element $u$ is not integral over $Z$ (so $u$ is a nonunit in $Z[u]$), the element $x$ itself is not irreducible. Additionally, if $x=(a+\alpha)(b+\beta)$ for some $a,b\in Z[u]$ and $\alpha,\beta\in xZ[u,u^{-1}][x]$, then the transcendence of $x$ gives that one of $a,b$ must be zero. If we say that $a=0$ (without loss of generality), then $\alpha$ cannot be irreducible. We see then that all the irreducible factors of~$x$ must have a nonzero contribution from $Z[u]$, which completes our argument.

(2) Set $p := \text{char}(K)$. As $K$ is algebraic over $\mathbb{F}_p$, every element of $R$ is algebraic over $\mathbb{F}_p$ and, as a result, $R$ is a field. Suppose now that $R$ is hereditary atomic. Since $R$ is a hereditarily atomic field, Corollary~\ref{por} guarantees that every subring of $R$ is a BFD and, therefore, satisfies ACCP. Hence $R$ is hereditarily ACCP.
\smallskip

(3) In the case of positive characteristic, we can assume, by using part~(2), that the transcendence degree of $K$ over $Q$ is at least $2$, which is the blanket assumption for the case of characteristic zero. Since $R$ satisfies ACCP, we note that if there exists $u \in R^\times$ such that $u$ is not integral over $Z$, then the fact that the transcendence degree of $K$ over $Q$ is at least $2$ assures us that condition~(1) holds, and so we are done. Hence we can further assume that every unit of $R$ is integral over~$Z$. Under this last assumption, Proposition \ref{augACCPsubrings} gives the desired result.
\smallskip

(4) Take $(y_n)_{n \in \nn}$ to be a sequence with terms in $R \setminus R^\times$, and then take $x \in \bigcap_{n \in \nn} (y_1 \cdots y_n)$ such that $x$ is transcendental over $Q(y_n \mid n \in \nn)$. We claim that $R$ is not hereditarily atomic. To verify our claim it suffices to check that the subring $S := Z[y_n \mid n \in \nn] + Rx$ of $R$ is not atomic. Since $x \in (y_1 y_2)$, it is not irreducible in $S$. As $x$ is transcendental over $Q(y_n \mid n \in \nn)$, we can proceed as we did in the proof of~(1) to conclude that $\mathsf{Z}_S(x)$ is empty. Thus, $S$ is not atomic.
\smallskip

(5) This condition is equivalent to condition~(4). Indeed, one can readily show that $(x_n)_{n \in \nn_0}$ is a strictly ascending chain of principal ideals of $R$ ascending from $(x)$ if and only if the sequence $(y_n)_{n \in \nn}$ defined by $y_n := \frac{x_{n-1}}{x_n}$ consists of nonunits of $R$ and satisfies that $x \in \bigcap_{n \in \nn} (y_1 \cdots y_n)$.
\smallskip

(6) We consider here the subring $T:=Z[y_n \mid n \in \nn]+ P$. Take $r_1, r_2\in Z[y_n \mid n \in \nn]$ and $p_1,p_2\in P$ such that $(r_1+p_1)(r_2+p_2)\in P$. Since $r_1 r_2 \in Z[y_n \mid n \in \nn] \bigcap P = (0)$, we see that $r_1 r_2 = 0$ and, therefore, either $r_1 + p_1$ or $r_2 + p_2$ belongs to $P$. Thus, $P$ is a prime ideal of the subring~$T$. We now note that $x$ cannot be factored into irreducibles in $T$ by observing that if $x = a_1 a_2 \cdots a_k$ with $a_1, a_2, \dots, a_k \in \ii(T)$, then as $P$ is prime, $a_j\in P$  for some $j \in \ldb 1,k \rdb$, and so 
\begin{equation} \label{eq:part (6)}
	a_j=r_0x+\sum_{k=1}^nr_k\frac{x}{y_1y_2\cdots y_k}
\end{equation}
for some choices of $r_0, \dots, r_n\in R$. We observe that~\eqref{eq:part (6)} shows that $y_{n+1}$ divides $a_j$, and this is our desired contradiction. We conclude that $T$ is not atomic.
\smallskip

(7) Let $S$ be the multiplicative subset of $R$ described in condition~(7). It is clear that $S$ is a multiplicative subset of $R$. Note that in this case, the ring $R$ itself can be easily shown to be non-atomic. Indeed, suppose that $s \in S$ is a nonunit and write $s = r_1 r_2 \cdots r_k$ for some nonunits $r_1, r_2, \dots, r_k \in R$. As $S$ is saturated, each~$r_i$ has an infinite chain of principal ideals ascending from it and hence cannot be irreducible.
\end{proof}

Despite this run of cases, Conjecture~\ref{conjecture} is still open. It would be interesting to see an answer to this conjecture as it should provide a further step towards unraveling the subtle interplay between atomicity and the ACCP condition.

\bigskip
%%%%%%%%%%%%%%%
%%%%%%%%%%%%%%%
\section*{Acknowledgments}

While working on this paper, the second author was kindly supported by the NSF award DMS-1903069.

\bigskip
%%%%%%%%%%%%%%
%%%%%%%%%%%%%%

\end{document}